\newcommand{\real}{{\mathbb R}}
\numberwithin{equation}{section}
\newtheorem{lemma}{Lemma}
\newtheorem{theorem}{Theorem}
\newtheorem{remark}{Remark}
\newtheorem{assumption}{Assumption}
\newtheorem{corollary}{Corollary}
\newcommand{\qed}{\mbox{\rule{.4em}{1.7ex}\hspace{.6em}}}
\newenvironment{proof}{{\bf Proof\ }}{\hspace*{.1em}\hfill\qed
\bigskip \noindent}
\title{A generalized model of flocking with steering}
\author[1]{Guy A. Djokam }
\author[2]{ Muruhan Rathinam}
\affil[1]{Department of Mathematics and Statistics, University of Maryland Baltimore County}
\affil[2]{Department of Mathematics and Statistics, University of Maryland Baltimore County}
\date{}
\begin{document}

\maketitle

 
\begin{abstract}
We introduce and analyze a model for the dynamics of flocking and steering of a finite number of agents. In this model, each agent's acceleration consists of flocking and steering components. The flocking component is a generalization of many of the existing models and allows for the incorporation of many real world features such as acceleration bounds, partial masking effects and orientation bias. The steering component is also integral to capture real world phenomena. We provide rigorous sufficient conditions under which the agents flock and steer together. We also provide a formal singular perturbation study of the situation where flocking happens much faster than steering. 
 We end our work by providing some numerical simulations to illustrate our theoretical results. 
\end{abstract}


\section{Introduction}
The emergence of phenomena such as flocking of birds, schooling of fish and swarming of bacteria have attracted considerable attention by mathematicians, scientists and engineers in the recent years. See \cite{ahn2012collision, cucker2010avoiding, cucker2007emergent,cucker2004modeling, motsch2011new, olfati2006flocking}, and references therein. Studying these phenomena not only help us understand the natural world, but also help us better engineer systems such as unmanned aerial vehicles. 
In \cite{vicsek1995novel}, Viscek and his team introduced a novel discrete time dynamics to investigate the emergence of self ordered motion. In Viscek's model, all agents have the same absolute velocity and at each step, they adjust their orientation based on their neighbors orientation. Inspired by this model, Cucker and Smale proposed the celebrated continuous time model  \cite{cucker2007emergent}, which led to many other subsequent studies. The Cucker-Smale (CS) model is:
for $i=1,\dots,N$ and $t \geq 0$
\begin{equation}   
\begin{aligned}
\frac{dx_i}{dt}&=v_i,  \\
\frac{dv_i}{dt}&= \frac{\alpha}{N}\sum_{j=1}^N a_{ij}(v_j-v_i),
\end{aligned}
\end{equation}
where $N$ is the number of agents, $x_i$ and $v_i$ are the position and velocity of agent $i$, and the {\em influence} $a_{ij}$ of agent $j$ on agent $i$ is assumed to be symmetric ($a_{ij}=a_{ji}$) and is a function of the Euclidean distance $\|x_i-x_j\|$ between $i$ and $j$, so that $a_{ij}=\phi(\|x_i-x_j\|)$.  
The function $\phi$ was chosen to be $\phi(r) = \frac{K}{(a^2 + r^2)^{\beta}}$, so that it was positive and non increasing. 

Cucker and Smale defined {\em flocking} by the condition that $$\sup_{t \geq 0}\|x_i(t)-x_j(t)\|<\infty$$ and that $$\lim_{t \to \infty} \|v_i(t)-v_j(t)\|=0$$ for every pair $(i,j)$ of agents.
The analysis of the CS model is based on the parameter $\beta$ and it is shown \cite{cucker2007emergent} that if $\beta < 1/2$, there is unconditional flocking and if $\beta \geq 1/2$ then flocking depends on initial conditions.  While the symmetric property of the influence functions led to ease of mathematical analysis, 
it is not realistic to assume symmetry. 


Motivated by the CS model, many variants have been extensively studied in the literature. For instance, in \cite{cucker2010avoiding,park2010cucker} the authors propose models to address collision avoidance and in \cite{ha2010emergent} the authors study a modified CS model with nonlinear velocity couplings. A stochastic version of the CS model with multiplicative white noise is studied in \cite{ahn2010stochastic, ha2009emergence}. In \cite{shen2008cucker,aureli2010coordination,motsch2011new} authors study model with hierarchical leader. 
An elegant analysis of flocking via the use of
a system of differential inequalities coupled with a Lyapunov function was introduced in
\cite{ha2009simple}.
In  \cite{motsch2011new}, Motsch and Tadmor present a more general model 
where the symmetry assumption on the influence functions is dropped. The Motsch and Tadmor (MT) model is given by
\begin{equation}\label{eqMT}
\begin{aligned}
\frac{dx_i}{dt} &= v_i, \\
\frac{dv_i}{dt} &= \alpha(\overline{v}_i-v_i),
\end{aligned}
\end{equation}
for $t \geq 0$, $i=1,\dots,N$, where $ \overline{v}_i = \sum_{j=1}^N a_{ij}v_j$ is a convex combination of the influences of all agents $j$ on agent $i$ so that
 $\sum_j a_{ij} =1$ and $a_{ij} \geq 0$. In this model, $\alpha>0$ is a
constant while $a_{ij}$ are taken to be some function of the pairwise
distances of the following form: 
\[
a_{ij}(x) = \frac{\phi(\|x_i-x_j\|)}{ \sum_{k} \phi(\|x_i-x_k\|)},
\]
where $\phi$ is a nonnegative function of distance. This form of $a_{ij}$ leads to lack of symmetry ($a_{ij} \neq a_{ji}$) and necessitated Motsch and Tadmor to introduce some new ideas into the analysis of flocking; in particular the concept of {\em maximal action} by a skew-symmetric matrix and the notion of an {\em active set}.

Our study is based on a finite number of agents where each agent follows a
similar rule though parameters appearing in these rules may vary from agent to
agent. The notion of the presence of leader agents is an important concept and
has been investigated in \cite{motsch2011new, shen2008cucker}.  It is
important to mention the development of continuum models which arise as
limiting models when the number of agents approaches infinity. These models
are based on partial differential equations that describe the evolution of the
density of the agents that formed the system. See
\cite{canizo2011well,haskovec2013flocking,motsch2011new} and reference
therein. It must be noted that flocking models usually are concerned with a
number of agents moving in the physical space and Newton's laws dictate that
such systems have a second order dynamics so that it is the accelerations of
agents that are usually controlled.  Models of first order self-organized
systems commonly arise in other applications such as opinion dynamics models
or flocking situations where one may reasonably assume that agents can
directly control their velocities.  See \cite{hendrickx2013convergence,stamoulas2018convergence} for instance.  

In this manuscript, we further generalize the MT model in ways that are inspired by the ability to account for acceleration bounds, masking effects as well as orientation bias. 
We endeavor to keep the model as general and flexible as possible while ensuring flocking behavior. 
Moreover, despite these generalizations, we believe that many real world phenomena may not be captured by a model that only incorporates flocking mechanisms without what we call {\em steering}. By steering, we mean additional acceleration by each agent which accounts for their 
individual responses to other external influences such as the need to compensate friction and gravity, pursuit of targets and evasion of danger.  

We note that the introduction of steering terms have been studied in the
literature, usually in conjunction with the symmetric CS type models 
\cite{caponigro2015sparse, bongini2014emergence, albi2016selective}. 
These researchers appear to be motivated from an engineering perspective, 
and are focused on the question of how to use the steering terms (controls)
to accomplish certain goals such as unconditional flocking, stabilization of
flocks etc. In contrast, our perspective is motivated more by biological
systems where, in addition to some built-in
urge to flock, each agent has its own whims in response to the external
world. Additionally, the non-symmetric flocking interactions in our model
makes the flavor of the analysis different. 

One important phenomenon observed in nature that is not captured by the CS, MT
as well as our model, is the {\em mill ring} where all the agents exhibit
a circular motion about a common axis of rotation with constant (in time)
angular velocities. In the literature mill ring as well as the {\em flock
  ring} formations have been studied \cite{d2006self, carrillo2010self,
  albi2014stability, carrillo2014nonlinear}. These models include a velocity
dependent acceleration term of the form $\alpha v_i - \beta \|v_i\|^2 v_i$ in
addition to position dependent potential forces.

The paper is organized as follows. In Section \ref{sec:gen-flock}, we motivate
our generalized flocking model via the need for acceleration bounds, the
presence of masking effects and orientation bias. We introduce the open loop
and closed loop aspects of the flocking model. Once the flocking part of the
model is described, we show that in the presence of friction the velocities of
all agents asymptotically approach zero. We also show that our model (without
steering terms) does not exhibit a nontrivial mill ring phenomenon. These and other considerations motivate us to the introduction of the steering forces. We also briefly discuss existence and uniqueness of solutions. In Section \ref{sec-analysis-of-flocking}, we provide an analysis of our model and prove some sufficient conditions on flocking. Section \ref{flock-steer-pertubation-approach} investigates the leading order behavior of the  flocking and steering model via a formal singular perturbation approach when flocking is much faster than steering. Numerical simulations are provided in Section \ref{Numerical_examples} that illustrate our analysis. 

\section{The generalized flocking and steering model} \label{sec:gen-flock} 
We first discuss the generalization of the flocking model  and then include steering. We observe that the Motsch-Tadmor model has two aspects. First is the velocity alignment aspect which is given by: $\dot{v}_i =\alpha(\overline{v}_i -v_i)$ where $\alpha>0$ is a constant and $\overline{v}_i = \sum_{j=1}^N a_{ij} v_j$, is a (time dependent) convex combination of $v_1,\dots,v_N$. Regardless of the nature of this combination, in the velocity space, the acceleration of agent $i$ is always pointed towards a point in the convex hull of all the velocities. The second aspect of the model involves how $a_{ij}$ depend on the positions $x_1,\dots,x_N$. We note that throughout this paper $\|z\|$ stands for the Euclidean norm of a vector $z \in \real^d$.

 \subsection{Apriori acceleration bounds}\label{sec:acc-bnd}
 We start with the reasonable assumption that the magnitude of the acceleration $\|\dot{v}_i\|$ of any agent $i$ may not exceed a certain predetermined value, say $A>0$.  
It is readily observed that in the Motsch-Tadmor model of \eqref{eqMT},
the acceleration of agent $i$ is always given by $\alpha (\overline{v}_i -v_i)$ and since $\alpha>0$ is independent of $t$ and $i$, this does not readily allow for the condition $\alpha \|\overline{v}_i-v_i\| \leq A$ to be satisfied. Simply relaxing the model to allow for $\alpha$ to depend on $i$ and $t$, readily provides for the condition on acceleration bound to be 
\[
\alpha_i(t) \leq \frac{A}{\|\overline{v}_i(t)-v_i(t)\|},
\]
which can always be satisfied since agent $i$ chooses a time varying value for $\alpha_i(t)$. Thus, one may regard $\alpha_i(t)$ as a scalar control input from agent $i$. The only condition on each agent $i$ is that the agent accelerates in a direction parallel to $\overline{v}_i -v_i$ and pointing in the same sense so that $\alpha_i(t)>0$.
A simple feedback law that each agent $i$ can implement may take the form
\begin{equation}
\alpha_i(t) = \xi_i(\overline{v}_i(t)-v_i(t)),
\end{equation}
where $\xi_i:\real^{d} \to [0,\infty)$. Then the condition on acceleration bound becomes $\xi_i(u) \leq A/\|u\|$.
Motivated by this discussion, we state the following assumption. 
\begin{assumption} \label{ass-xi}
For $i=1,\dots,N$, the functions  $\xi_i:\real^{d} \to (0,\infty)$ 
are $C^1$ (continuously differentiable), strictly positive and there exists $A>0$ such that 
\begin{equation}
  \xi_i(u) \leq A /\|u\|, \text{ for } u \neq 0, \; i=1,\dots,N.
\end{equation}  
\end{assumption}
We note that the $C^1$ assumption helps ensure existence uniqueness 
of solutions. 
A simple example of $\xi_i$ is given by
\begin{equation}
\xi_i(u) = \frac{A}{\sqrt{a^2 + \|u\|^2}}, \quad i=1,\dots,N,
\end{equation}
where $a>0$ is some constant. 

\subsection{Masking effect and orientation bias}\label{sec:mask-orient}
In the CS model, the influence of agent $j$ on $i$ is given by  
the form $a_{ij}=\phi(\|x_i-x_j\|)$ whereas in the MT model it is given by 
\[
a_{ij}=\phi(\|x_i-x_j\|)/\sum_k \phi(\|x_i-x_k\|),
\] 
where $\phi:[0,\infty) \to [0,\infty)$. This form assumes that the influence of $j$ on $i$ is a function of all the pairwise distances. This specific form is not general enough to model masking effects. In order to explain this, we refer to Figure \ref{mask-effect}. In the position space, if a third agent $l$ is present in the line segment joining agents $i$ and $j$, then the influence of $j$ on $i$ may be lesser than if there were no agents present. This motivates a very general form of position dependence for $a_{ij}$. Additionally, the effect of agent $j$ on agent $i$ will depend on the orientation of the field of view of agent $i$. It is natural to  consider the orientation of agent $i$ as the unit vector $v_i/\|v_i\|$. However, this is undefined when $v_i=0$. To avoid singularities, we consider agent $i$'s orientation $u_i$ to be a $C^1$ function of $v_i$, so that $u_i=\sigma_i(v_i)$ where $\sigma_i:\real^d \to \bar{B}^d$ where $\bar{B}^d$ is the closed unit ball in $\real^d$. 
An example of $\sigma_i$ is given by
\[
\sigma_i(u)=  \frac{u}{\sqrt{\|u\|^2 + b_i^2}},
\]
where $b_i$ is a nonzero real number. 
These two observations suggest the following form for $a_{ij}$:
\begin{equation}\label{eq-phi}
a_{ij} = \phi_{ij}(x;\sigma_i(v_i)),
\end{equation}
where $x=(x_1,\dots,x_N)\in \real^{Nd}$, $\sigma_i:\real^d \to \bar{B}^d$ and
$\phi_{ij}:\real^{Nd} \times \real^d \to [0,\infty)$. We note that $\bar{B}^d$
  is the closed unit ball in $\real^d$. Thus the influence of agent $j$ on agent $i$ can be a nuanced function of the positions of all the agents as well as the velocity of agent $i$. We state our assumptions on $\phi_{ij}$. 
\begin{assumption}\label{ass-phi}
For $1 \leq i,j \leq N$,  $\phi_{ij}:\real^{Nd} \times \real^d \to (0,\infty)$ are $C^1$ and strictly positive. Moreover, $\phi_{ij}$ are shift invariant in position:
\begin{equation}\label{eq-phi-shift}
\phi_{ij}(x_1+y,x_2+y,\dots,x_N+y;u) = \phi_{ij}(x_1,x_2,\dots,x_N;u),
\end{equation}
$\forall x \in \real^{Nd}, \forall y \in \real^d, \forall u \in \bar{B}^d$.
Additionally, $\sigma_i:\real^d \to \bar{B}^d$ are $C^1$.  
\end{assumption}
We note that the shift invariance assumption is reasonable since the influence of agent $j$ on agent $i$ must only depend on the relative positions of all the agents, but not on their absolute positions. As before, the $C^1$ assumption helps ensure existence uniqueness results. The strict positivity assumptions on $\phi_{ij}$ are utilized in our flocking results and are a statement of lack of complete masking. That is, each agent has a nontrivial influence on every other agent regardless of the relative configuration. 

While our goal in this paper is to develop a general model, we mention that
an example of an influence function $\phi_{ij}$ that incorporates the masking
effect is given in Appendix \ref{sec:mask-example}.

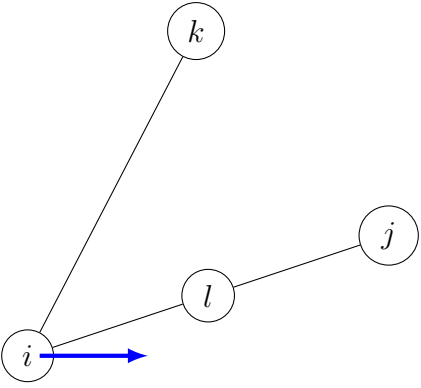
\begin{figure}[t]
  \begin{tikzpicture}
    [scale=.8,auto=left,every node/.style={}]
    \node (n1) at (0,0)[circle,draw]  {$i$};
    \node (n2) at (6,2)[circle,draw]  {$j$};
    \node (n3) at (2.8,5.4)[circle,draw] {$k$};
    \node (n4) at (3,1) [circle,draw] {$l$};
    \draw (n1)--(n3);
    \draw (n1)--(n4);
    \draw (n4)--(n2);
    \draw[->,ultra thick, blue,  arrows={-latex}]  (0.2,0) -- (2,0); 
 \end{tikzpicture}
\caption{\em Masking effect and orientation bias. The agents $j$ and $k$ are equidistant from agent $i$. Nevertheless, agent $l$ contributes to masking effect which diminishes agent $j$'s influence on agent $i$. On the other hand, agent $i$ is moving to the right and in agent $i$'s field of view agent $j$ is in a more prominent position than agent $k$, which diminishes agent $k$'s influence on agent $i$.}
\label{mask-effect}
\end{figure}

\subsection{The open loop and closed loop models}\label{sec:open-closed-flock}
It is instructive to consider our general model as forming two layers. The first layer, is the ``open loop'' model given by
\begin{equation}\label{eq-open-loop}
\begin{aligned}
\dot{x}_i &= v_i,\\
\dot{v}_i &= \alpha_i (\overline{v}_i-v_i),\\
\overline{v}_i &= \sum_{j=1}^N a_{ij} v_j,\\
a_{ij} &\geq 0 ,\;\; \sum_{j=1}^N a_{ij}=1,\;\; \alpha_i \geq 0\\
\end{aligned}
\end{equation}
where $\alpha_i$ and $a_{ij}$ are considered to be given functions of $t$,
which can be regarded as control inputs from agent $i$. The second layer of
our model specifies how $\alpha_i$ and $a_{ij}$ are chosen as functions of
positions and velocities, thus ``closing the loop''. The closed loop model thus contains the equations
\begin{equation}\label{eq-closed-loop}
\begin{aligned}
\dot{x}_i &= v_i, \\
\dot{v}_i &= \alpha_i (\overline{v}_i-v_i),\\
\overline{v}_i &= \sum_{j=1}^N \phi_{ij}(x;u_i) v_j,\\
u_i &= \sigma_i(v_i),\\
\alpha_i &= \xi_i(\overline{v}_i-v_i),
\end{aligned}
\end{equation} 
for $i=1,\dots,N$, where $\xi_i$ and $\phi_{ij}$ satisfy Assumptions
\ref{ass-xi} and \ref{ass-phi} respectively.  

\subsection{Inclusion of friction}\label{sec:friction}
In the real physical world, forces such as aerodynamic friction are present. We will consider a form of friction which is proportional to some power of the velocity. We have the following (open loop) system:
\begin{equation} \label{dynamicalsyst_friction}
  \begin{aligned}
   \dot{x}_i & = v_i,  \\
   \dot{v}_i &= \alpha_i(t)(\overline{v}_i - v_i) - c_i\|v_i\|^r v_i,
   \end{aligned}
\end{equation} 
for $i=1,\dots,N$, where $r \geq 0$. The following lemma shows very trivial asymptotic behavior.   
\begin{lemma}
Suppose $\{x_i(t),v_i(t)\}_{i=1}^N$ is a $C^1$ solution of system \eqref{dynamicalsyst_friction}. Then for each $i$ 
\[
   \lim_{t\to\infty}v_i(t) = 0. 
\]
\end{lemma}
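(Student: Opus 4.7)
The plan is to reduce the statement to a scalar differential inequality for the maximum squared speed. Set $M(t) := \max_{1 \le i \le N}\|v_i(t)\|^2$. As the pointwise maximum of finitely many $C^1$ nonnegative functions, $M$ is locally Lipschitz, and its upper Dini derivative satisfies the envelope identity
\[
D^+ M(t) = \max_{i \in I(t)} \frac{d}{dt}\|v_i(t)\|^2,\qquad I(t) := \{i : \|v_i(t)\|^2 = M(t)\},
\]
which is a standard fact about maxima of smooth functions.

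First I would differentiate along the dynamics to obtain
\[
\frac{d}{dt}\|v_i\|^2 = 2\alpha_i(t)\langle v_i,\overline{v}_i - v_i\rangle - 2c_i\|v_i\|^{r+2}.
\]
Next, for any index $i \in I(t)$, since $\overline{v}_i = \sum_j a_{ij}v_j$ is a convex combination of velocities, each with $\|v_j\| \le \sqrt{M(t)} = \|v_i\|$, the Cauchy--Schwarz inequality gives
\[
\langle v_i,\overline{v}_i\rangle \le \sum_j a_{ij}\|v_i\|\|v_j\| \le \|v_i\|^2,
\]
so that the alignment term $2\alpha_i(t)\langle v_i,\overline{v}_i - v_i\rangle$ is nonpositive on $I(t)$. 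Setting $c_{\min} := \min_i c_i$ and assuming $c_{\min} > 0$ (the physically meaningful friction regime), combining the two bounds yields the scalar differential inequality
\[
D^+ M(t) \le -2 c_{\min}\, M(t)^{(r+2)/2}.
\]

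A standard scalar comparison argument then forces $M(t) \to 0$: for $r = 0$ one gets exponential decay $M(t) \le M(0) e^{-2 c_{\min} t}$, while for $r > 0$ separation of variables in the comparison ODE gives polynomial decay of order $t^{-2/r}$. In either case, since $\|v_i(t)\|^2 \le M(t)$, the conclusion $v_i(t) \to 0$ follows for every $i$.

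The only technical subtlety I anticipate is justifying the envelope formula for the nonsmooth function $M(t)$; once that is in hand, the proof reduces to the one-line Cauchy--Schwarz estimate above followed by the scalar ODE comparison, and requires no additional hypothesis on the possibly time-varying gains $\alpha_i(t)$ or weights $a_{ij}(t)$ beyond those built into \eqref{dynamicalsyst_friction}.
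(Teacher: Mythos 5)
Your proof is correct and follows essentially the same route as the paper's: both take the maximum of the squared speeds as a Lyapunov function, use Cauchy--Schwarz together with the convex-combination structure of $\overline{v}_i$ to discard the alignment term at a maximizing index, and close with the scalar comparison $D^+M \le -2c_{\min}\,M^{(r+2)/2}$. The only differences are technical rather than conceptual: you invoke the Dini-derivative envelope formula where the paper uses its absolute-continuity lemmas (Lemmas \ref{lem-loclip-abscont} and \ref{lem-max-deriv}), and you treat $r=0$ as a separate exponential-decay case, which the paper's integration step (dividing by $r$) quietly passes over.
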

\begin{remark}We note that Lemma \ref{lem-loclip-abscont} and Lemma 
\ref{lem-max-deriv} given in the appendix will be frequently used in the proofs of the results in this paper.
\end{remark}
\begin{proof}
  We define an energy of the system by $E = \max_{1\leq j\leq N} E_j$ where $E_j = \frac{1}{2}\|v_j\|^2$.
   Then by Lemmas \ref{lem-loclip-abscont} and
\ref{lem-max-deriv} $E(t)$ is absolutely continuous and $dE/dt(t) = dE_i/dt(t)$ for almost all $t$ where $i=i(t)$ is 
an index of the maximum. 
Thus, for almost all $t$, 
\[
 \begin{aligned}
 \frac{dE}{dt} &=  \langle v_i,\dot{v}_i \rangle
 			 = \langle v_i,\alpha_i(\bar{v}_i - v_i) - c_i\|v_i\|^r v_i \rangle 
 			 =-c_i\|v_i\|^{r+2} +\alpha_i \langle \bar{v}_i,v_j \rangle -\alpha_i\|v_i\|^2\\
 			 &= -c_i\|v_i\|^{r+2} +\alpha_i\sum_j a_{ij} \langle v_i,v_j \rangle -\alpha_i\|v_i\|^2\\
 			 &\leq-c_i\|v_i\|^{r+2} -\alpha_i\|v_i\|^2 +\alpha_i\|v_i\|\sum_ja_{ij}\|v_j\|
 			 \leq-c_i\|v_i\|^{r+2} \\
 \end{aligned}
\]
where we have used the Cauchy-Schwartz inequality and the fact that 
$\|v_i\| \geq \|v_j\|$ for all $j$. We also note that the index $i$ in general varies with $t$.  
Letting $ \underline{c} = \min_i c_i,$ we have
\begin{equation}\label{Energydis}
	\frac{dE(t)}{dt} \leq -\underline{c}\|v_i\|^{r+2} \leq - 2^{\frac{r}{2}+1} \, \underline{c} \, (E(t))^{\frac{r}{2}+1}.
\end{equation}
 Multiplying both side by $(E(t))^{-\frac{r}{2}-1}$, we have 
 \[
  \begin{aligned}
    (E(t))^{-\frac{r}{2}-1}\frac{dE(t)}{dt} &\leq - 2^{\frac{r}{2}+1} \, \underline{c}\\
   -\frac{2}{r} \frac{dE(t)^{-\frac{r}{2}}}{dt} &\leq - 2^{\frac{r}{2}+1} \, \underline{c}
  \end{aligned}    
 \]
 integrating the last inequality from $0$ to $t$ after some algebra manipulation, we have 
 \[
   (E(t))^{-\frac{r}{2}} - (E(0))^{-\frac{r}{2}} \geq  2^{\frac{r}{2}}r\,\underline{c}t
 \] 
 which implies
 \[
   E(t) \leq \frac{1}{((E(0))^{-\frac{r}{2}} +2^{\frac{r}{2}}r\,\underline{c}t )^{\frac{2}{r}}}.
 \]
Thus 
$E(t) \to 0$ as $t \to \infty$.
\end{proof}
Thus the addition of the friction shows that the asymptotic velocities go to
zero.
We note that the inclusion of friction into the CS model
in conjunction with a ``self propulsion'' term has been analysed in the literature
\cite{ha2010asymptotic}.

\subsection{Mill ring phenomena}
In nature, collective motions of living species can often exhibit milling
phenomena. That is a phenomenon in which agents rotate about a common axis of
rotation with a constant angular velocity. See \cite{albi2014stability} and
references therein for
instance. It is instructive to see if our
model can have a milling solution. In fact we show here that it is not
possible to have a mill ring solution for our model without the addition of
the steering terms.  

For simplicity, we will assume that agents are in $\real^3$ and that the axis
of rotation is the $z$-axis. We allow for different radii $R_i$, different
(constant) angular velocities $\omega_i$ and different $z$ coordinate
values $k_i$ for agents $i=1,\dots,N$.  
So we look for a solution of the following form. For each $i$, 
\begin{equation}\label{eq-ring-mill-soln}
\begin{aligned}
 x_i(t)  &= R_i\cos(\omega_i t +\theta_i)e_1 + R_i\sin(\omega_it +\theta_i) + k_i e_3,\\
 v_i(t) &= -R_i\,\omega_i\sin(\omega_it +\theta_i)e_1 + R_i\,\omega_i\cos(\omega_it +\theta_i)e_2,\\
  \end{aligned}
\end{equation}
where $e_1, e_2$ and $e_3$ are the standard basis (unit) vectors.

\begin{lemma}\label{lem-no-ring-mill}
Suppose \eqref{eq-ring-mill-soln} is a solution of the open loop model
\eqref{eq-open-loop} with $\alpha_i(t) \geq 0$. Then $v_i(t)=0$ for all $i$ and $t$. That is, the
only possible mill ring solution is the stationary mill ring. 
\end{lemma}
\begin{proof}
Our proof is mainly algebraic and the reasoning applies at each time $t$ and
hence we suppress showing the dependence on $t$ of $x_i, v_i$ etc.
It readily follows from \eqref{eq-ring-mill-soln} that $v_i$ and $\dot{v}_i$ are perpendicular for each $i$. This implies that \[
  \langle \dot{v}_i,v_i\rangle = \alpha_i \, \langle \bar{v}_i - v_i, v_i\rangle = 0,
  \]
  and hence either $\alpha_i=0$ or $\langle \bar{v}_i - v_i, v_i\rangle = 0$.
If $\alpha_i=0$ then $\dot{v}_i=0$. Since $\dot{v}_i=-\omega_i^2 (x_i - k_i
e_3)$, either $\omega_i=0$ or $x_i=k_i e_3$. In either case $v_i=0$.

Alternatively $\langle \bar{v}_i - v_i, v_i\rangle = 0$.
Hence
\begin{equation}\label{eq:v-bar-v-norm}
 \langle \bar{v}_i, v_i \rangle = \|v_i\|^2.
\end{equation}
We first show that
\begin{equation}
   \|v_i\| = \|v_j\|  \,\,\,\,\,\forall i,j \in \{1,\dots,N\}. \label{eq:all-velocity-are-equal-in-ring}
\end{equation} 

From \eqref{eq:v-bar-v-norm} and the Cauchy Schwartz inequality, we have 
\[
    \|v_i\|^2 = \sum_{j=1}^N a_{ij}\langle v_i,v_j\rangle
    \leq \sum_{j=1}^N a_{ij}\|v_i\|\|v_j\|,
    \]
and hence     
$\|v_i\| \leq \sum_{j=1}^Na_{ij}\|v_j\|$ for each $i$. 

Choose $i$ such that $\|v_i\| = \max\|v_j\| $.
Suppose that there exists $k$ such that $\|v_k\|<\|v_i\|$.  Then 
  \[
  \|v_i\| \leq \sum_{j=1}^Na_{ij}\|v_j\|\, < \sum_{j=1}^Na_{ij}\|v_i\| = \|v_i\|,
  \]
leading to a contradiction.
Thus $\|v_i\| = \|v_j\|$ for all $i,j \in \{1,\dots,N\}$.
Next we show that
\begin{equation}
  \overline{v}_i = v_i \,\,\,\,\forall i \in \{1,\dots,N\}.
\end{equation}
 From \eqref{eq:v-bar-v-norm} 
and the Cauchy Schwartz Inequality, we have that 
$\|\overline{v}_i\|\|v_i\| \geq \|v_i\|^2$ and hence 
$\|\overline{v}_i\| \geq \|v_i\| $. On the other hand 
\[
  \|\overline{v}_i\|\leq  \sum_j a_{ij}\|v_j\|  = 
  \sum_j\,a_{ij} \|v_i\| =
  \|v_i\|,
  \]
where we have used \eqref{eq:all-velocity-are-equal-in-ring}. 
Hence $\|\overline{v}_i\| = \|v_i\|$ for all $i$, and therefore, from
\eqref{eq:v-bar-v-norm} we conclude that $\overline{v}_i = v_i$ for each $i$. 
As a consequence, for each $i$, 
$\dot{v}_i = \alpha_i(\overline{v}_i-v_i) = 0$.
Noting that
$\dot{v}_i = \ddot{x}_i = -\omega_i^2(x_i - k_i\,e_3)$,
we conclude that for each $i$,  
\[
 \omega_i = 0\,\, \text{or}\,\, x_i = k_i e_3. 
\]
In either case, $v_i = 0$ for each $i$. Thus all the agents are stationary.  
\end{proof}

We remark that contrary to our earlier stipulation that $\alpha_i(t)>0$ (which
was motivated by the need to guarantee flocking), in
Lemma \ref{lem-no-ring-mill} we allowed for the possibility $\alpha_i(t)=0$. 

\subsection{Steering}\label{sec:steer}

The previous two subsections illustrate certain shortcomings of the open
loop model \eqref{eq-open-loop} which primarily focuses on velocity
alignment. One is that the inclusion of friction
leads to unrealistic behavior without a term to compensate for it.
Many researchers have incorporated a ``self-propulsion'' acceleration term
which is proportional to the agent's velocity ($k v_i$) to compensate friction. See for instance 
\cite{ha2010asymptotic, d2006self, carrillo2010self}.

The second shortcoming we observed is that \eqref{eq-open-loop} does not support a nontrivial
mill ring solution. It must be noted that flocking
models exhibiting mill ring phenomena have been studied in the literature.
See for instance \cite{albi2014stability, d2006self, carrillo2010self,
  carrillo2014nonlinear} and references therein. It must be noted that these
authors consider models that have attraction and/or repulsive forces
via a potential that depends on relative positions in conjunction with a
velocity dependent acceleration of the form $\alpha v_i - \beta \|v_i\|^2
v_i$. Our model as well as the
CS and MT models do not share this feature. 

Our approach to capture rich behavior in the flocking model is to introduce
a ``steering'' component to each agent's acceleration. We feel that these
steering terms make intuitive sense. In reality a group of agents may want to
follow a desired trajectory in addition to staying together as a flock.
Thus, each agent $i$ may have an extra acceleration $\beta_i(t)$ which contributes to steering. 
This steering term can also act to cancel other external forces such as
friction and gravity. We interpret $\beta_i(t)$ in the following as the
steering component in excess of friction and gravity.

This leads to the system
\begin{equation}\label{eq-flock-steer-open}
\begin{aligned}
\dot{x}_i &= v_i,\\
\dot{v}_i &= \alpha_i (\overline{v}_i-v_i) + \beta_i,\\
\overline{v}_i &= \sum_{j=1}^N a_{ij} v_j,\\
a_{ij} &\geq 0, \;\; \sum_{j=1}^N a_{ij}=1,\;\; \alpha_i \geq 0\\
\end{aligned}
\end{equation}
for the open loop with steering and
\begin{equation}\label{eq-flock-steer-closed}
\begin{aligned}
\dot{x}_i &= v_i, \\
\dot{v}_i &= \alpha_i (\overline{v}_i-v_i) + \beta_i,\\
\overline{v}_i &= \sum_{j=1}^N \phi_{ij}(x;u_i) v_j,\\
u_i &= \sigma_i(v_i),\\
\alpha_i &= \xi_i(\overline{v}_i-v_i),
\end{aligned}
\end{equation} 
for the closed loop with steering. 
\begin{assumption}\label{ass-beta}
The steering functions $\beta_i:[0,\infty) \to \real^d$ for $i=1,\dots,N$ are continuous.
\end{assumption}

We note that both the open loop \eqref{eq-flock-steer-open} and the closed
loop \eqref{eq-flock-steer-closed} are {\em differentially flat}
\cite{levine2011necessary, van1998differential} and hence any given
sufficiently smooth trajectory
for the positions $x(t)=(x_1(t),\dots,x_N(t))$ is feasible. This is easy to
see as given $x_i(t)$ (for $i=1,\dots,N$) one may readily obtain $v_i(t)$ and $\beta_i(t)$
from the equations. Thus the mill ring phenomenon is certainly possible. 

Finally, we observe that steering terms in conjunction with CS type models have been
introduced and studied from a control theoretic perspective in
\cite{caponigro2015sparse, bongini2014emergence, albi2016selective}.

\subsection{Existence and uniqueness}\label{sec-exist-unique}
We briefly discuss existence and uniqueness of solutions of the open loop and closed loop models \eqref{eq-flock-steer-open} and \eqref{eq-flock-steer-closed}. The open loop model is linear and non-autonomous and hence it is adequate to assume that $\alpha_i(t)$, $a_{ij}(t)$ and $\beta_i(t)$ are all continuous in time. The closed loop model is of the form
\[
\dot{z} = F(z) + \beta(t)
\]
where $z=(x_1,\dots,x_N,v_1,\dots,v_N) \in \real^{2Nd}$ and $F$ is $C^1$ by our assumptions on $\phi_{ij}$ and $\xi_i$. Again if we assume $\beta_i(t)$ to be continuous in $t$ then for any given initial condition for $z(0)$, we are assured of a unique solution in an open maximal interval of time containing $0$. 

In order to discuss flocking behavior, it is important to ensure that the forward maximal interval of existence is $[0,\infty)$. When the steering is open-loop, with Assumption \ref{ass-beta}, it is shown in Lemma \ref{lem-max-interval} that the forward maximal interval is infinite. When steering is considered to be closed-loop, that is some function of position and velocity, then a different analysis is needed.

\section{Analysis of flocking} \label{sec-analysis-of-flocking}
 
\subsection{Mathematical preliminaries} \label{sec:math-preleminaries}
First we define some relevant concepts and state some useful lemmas. Given the positions $x_i(t)$ and velocities $v_i(t)$ (where $i=1,\dots,N$) of agents, we denote by $d_X(t)$ and $d_V (t)$ the diameters in position and velocity spaces $\real^{Nd}$:
\begin{equation}
\begin{aligned}
        d_X(t)&=\max_{i,j} \|x_j(t) - x_i(t)\|,\\
        d_V (t)&=\max_{i,j} \|v_j(t) - v_i(t)\|.
\end{aligned}
\end{equation} 
The system $\{x_i(t),v_i(t)\} $ $i=1,...,N $ is said  to {\em converge to a flock}, if the following two conditions hold:
\begin{equation}\label{eq-flock}
        \sup_{t\geq 0}d_X(t)< \infty, \quad
        \lim_{t\to \infty}d_V(t)= 0.
\end{equation}
We define $d_\beta(t)$, the diameter in the ``steering space'' by
\begin{equation}\label{eq-dbeta-def}
d_\beta(t) = \max_{i,j} \|\beta_j(t) - \beta_i(t)\|.
\end{equation}
The flocking analysis in this paper uses the notion of {\em active sets} developed in \cite{motsch2011new}. Recall that $a_{ij}(t)$ denotes the influence of agent $j$ on agent $i$ at time $t$ and that $a_{ij}(t)\geq 0$  and $\sum_{j=1}^N a_{ij}(t) =1$. Given $\theta > 0$, it is instructive to consider the set of all agents who influence a given agent $1 \leq p \leq N$ by an amount greater than or equal to $\theta$. This is known as the {\em active set} $\Lambda_p(\theta)$ for agent $p$: 
\begin{equation}\label{eq-Lambda-p}
  \Lambda_p(\theta) = \{j \, | \, a_{pj} \geq \theta\}.
\end{equation}
For a pair of agents $p$ and $q$, the common active set $\Lambda_{pq}(\theta)$ 
is simply the intersection $\Lambda_p(\theta) \cap \Lambda_q(\theta)$. 
The {\em global active set} $\Lambda(\theta)$ is the intersection of all the
active sets:
\begin{equation}\label{eq-Lambda}
\Lambda(\theta) = \bigcap_p \Lambda_p(\theta).
\end{equation}
The following lemma from \cite{motsch2011new} is critical.
\begin{lemma}\cite{motsch2011new}\label{lem-antisym}
Let $S$ be and antisymmetric matrix, $S_{ij} = -S_{ji} $ with $| S_{ij}| \leq M$.
Let  $ u,w \in \real^n$ be two given vectors with positive entries, $u_i ,
w_i\geq 0$ and let $ \overline{U}, \overline{W}$ denoted their respective
sums, $ \overline{U}=\sum_{i} u_i $ and $ \overline{W}=\sum_jw_j$. 
Fix  $\theta>0 $ and let $\lambda(\theta)  $ denoted the number of ``active
entries'' of $u$ and $ w$ at the level $\theta$ in the sense that 
 $$ \lambda(\theta)=|\Lambda(\theta)|,$$  $$ \Lambda(\theta) = \{ j \, | \,u_j\geq \theta \overline{U}  \text{ and } w_j \geq \theta\overline{W}\}. $$ 
Then for every $\theta >0 $, we have $$|\langle Su,w\rangle|\leq M\overline{U}\overline{W}(1-\lambda^2(\theta)\theta^2).$$
\end{lemma}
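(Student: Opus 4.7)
The plan is to exploit the antisymmetry of $S$ by symmetrizing the bilinear form $\langle Su, w\rangle$, and then isolate the contribution of indices in the active set $\Lambda(\theta)$ to extract the $\lambda^2\theta^2$ savings.

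First I would expand $\langle Su,w\rangle = \sum_{i,j} S_{ij}\,u_j w_i$, swap the dummy indices in a second copy of the sum, and invoke $S_{ji}=-S_{ij}$ to obtain the symmetrized form
\[
\langle Su,w\rangle = \tfrac{1}{2}\sum_{i,j} S_{ij}\bigl(u_j w_i - u_i w_j\bigr).
\]
The triangle inequality together with $|S_{ij}|\leq M$ then gives $|\langle Su,w\rangle|\leq \tfrac{M}{2}\sum_{i,j}|u_j w_i - u_i w_j|$, so the problem reduces to bounding this absolute-value sum from above by $2\,\overline{U}\,\overline{W}(1-\lambda^2\theta^2)$.

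Next I would apply the elementary identity $|a-b|=a+b-2\min(a,b)$, valid for $a,b\geq 0$, to each term $|u_j w_i - u_i w_j|$. The key observation is that when $(i,j)\in\Lambda(\theta)\times\Lambda(\theta)$, the defining bounds $u_k\geq\theta\overline{U}$ and $w_k\geq\theta\overline{W}$ force $\min(u_j w_i,u_i w_j)\geq \theta^2\,\overline{U}\,\overline{W}$, so for these $\lambda^2$ pairs I obtain the sharpened bound $|u_j w_i-u_i w_j|\leq u_j w_i+u_i w_j - 2\theta^2\,\overline{U}\,\overline{W}$. For all remaining pairs I simply discard the $\min$ term using only $\min\geq 0$, yielding $|u_j w_i-u_i w_j|\leq u_j w_i+u_i w_j$.

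Finally I would sum these pairwise bounds over all $(i,j)$ and use $\sum_{i,j}(u_j w_i+u_i w_j)=2\,\overline{U}\,\overline{W}$; the surplus $-2\theta^2\,\overline{U}\,\overline{W}$ collected on each of the $\lambda^2$ active pairs produces $\sum_{i,j}|u_j w_i-u_i w_j|\leq 2\,\overline{U}\,\overline{W}(1-\lambda^2\theta^2)$, and multiplying by $M/2$ delivers the claimed inequality. The main conceptual step is recognizing that antisymmetry transforms the bilinear form into differences $u_j w_i - u_i w_j$, for which the active set furnishes a uniform lower bound on the minimum of the two products; the rest is routine arithmetic and index-counting.
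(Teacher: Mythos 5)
Your proof is correct: the symmetrization $\langle Su,w\rangle=\tfrac12\sum_{i,j}S_{ij}(u_jw_i-u_iw_j)$, the identity $|a-b|=a+b-2\min(a,b)$ for $a,b\ge 0$, and the lower bound $\min(u_jw_i,u_iw_j)\ge\theta^2\overline{U}\,\overline{W}$ on the $\lambda^2$ active pairs assemble exactly into the claimed estimate. The paper itself gives no proof of this lemma --- it is quoted from Motsch--Tadmor --- and your argument is precisely the standard one from that reference, so there is nothing to add.
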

For our analysis, in addition to Lemma \ref{lem-antisym}, we need the following simple lemma about the convex hull of a finite set of points in $\real^d$.  
\begin{lemma}\label{lem-hyperplane}
 Let $ \{v_i\}_{i=1}^N$ be a set of vectors in $\real^d$ and let $\Omega$ be 
their convex hull. If $v_p$ and $v_q$ delimit the diameter of the convex hull, 
(that is $v_p$ and $v_q$ are furthest apart), then for each $v \in \Omega$ 
\[ 
\langle v_p -v_q,v - v_q \rangle \geq 0.
\]   
\end{lemma}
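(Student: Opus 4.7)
The plan is to fix $v_p$ and study the function $f:\Omega\to\real$ defined by $f(v)=\|v_p-v\|^2$. Since $v_p$ and $v_q$ realize the diameter of $\Omega$, we have $\|v_p-v\|\le\|v_p-v_q\|$ for every $v\in\Omega$, so $f$ attains its maximum on $\Omega$ at $v=v_q$. The inequality claimed in the lemma is then nothing more than the first-order necessary condition for this constrained maximum in the convex direction from $v_q$ toward any other point of $\Omega$.

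To carry this out, I would proceed as follows. Given an arbitrary $v\in\Omega$, convexity of $\Omega$ guarantees that the segment $w_t = v_q + t(v-v_q)$ lies in $\Omega$ for all $t\in[0,1]$. Define $g(t)=f(w_t)=\|v_p-v_q-t(v-v_q)\|^2$. Expanding the norm gives
\[
g(t)=\|v_p-v_q\|^2-2t\,\langle v_p-v_q,\,v-v_q\rangle+t^2\|v-v_q\|^2.
\]
Since $g(t)\le\|v_p-v_q\|^2=g(0)$ for $t\in[0,1]$, we must have $g'(0)\le 0$. A direct computation gives $g'(0)=-2\langle v_p-v_q,\,v-v_q\rangle$, hence $\langle v_p-v_q,\,v-v_q\rangle\ge 0$ as desired.

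Equivalently, one can argue by contradiction: if $\langle v_p-v_q,\,v-v_q\rangle<0$ for some $v\in\Omega$, then for sufficiently small $t>0$ the linear term in the expansion of $g(t)$ dominates the quadratic term, producing $\|v_p-w_t\|>\|v_p-v_q\|$ with $w_t\in\Omega$, contradicting the assumption that $v_p,v_q$ delimit the diameter of $\Omega$. There is no real obstacle in this proof; the only thing to be careful about is that we use the full convex hull $\Omega$ (not merely the finite set $\{v_i\}$) when forming the segment $w_t$, which is where convexity of $\Omega$ is essential. The finiteness of the original set $\{v_i\}$ plays no role beyond ensuring that the supremum defining the diameter is attained.
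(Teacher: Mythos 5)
Your proof is correct. It takes a somewhat different route from the paper's: the paper first establishes the inequality for the generating points $v_i$ by a hyperplane/contradiction argument (if $\langle v_p-v_q, v_i-v_q\rangle<0$ then $v_i$ lies on the far side of the hyperplane through $v_q$ perpendicular to $v_p-v_q$, forcing $\|v_i-v_p\|>\|v_p-v_q\|$), and then extends to all of $\Omega$ by writing $v$ as a convex combination and using linearity of the inner product. You instead treat an arbitrary $v\in\Omega$ directly, viewing $v_q$ as a maximizer of $\|v_p-\cdot\|^2$ over the convex set $\Omega$ and extracting the first-order optimality condition $g'(0)\le 0$ along the segment $w_t=v_q+t(v-v_q)$. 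Both arguments hinge on the same expansion of a squared norm; the difference is that the paper's contradiction point is the vertex $v_i$ itself (taking $t=1$, where the nonnegative quadratic term only helps), while yours is a nearby point $w_t$ for small $t$. Your version is slightly more economical in that it bypasses the reduction to vertices and, as you note, uses finiteness of $\{v_i\}$ only to guarantee the diameter is attained; the paper's version makes the supporting-hyperplane geometry more explicit, which is the picture its Figure 2 is illustrating.
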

 \begin{proof}
Let the diameter of $\Omega$ equal $\|v_p-v_q\|$.
We first show that 
\[
\langle v_p -v_q,v_i - v_q \rangle \geq 0 \quad \forall i.
\]
Let $ H_{pq} $ be the hyperplane passing through $ v_q$ and is perpendicular 
to $ v_p - v_q$. (See Figure \ref{fig-hyperplane}). Suppose there is some $i$ such that $\langle v_p -v_q, v_i - v_q\rangle < 0$. This shows that $v_i$ and $v_p$ will be on opposite sides of the hyper plane $H_{pq}$, implying that $\|v_i - v_p\| > \|v_q-v_p\|$, a  contradiction. 
Given any $v \in \Omega$,  there exist $a_i \geq 0$ for $i=1,\dots,n$ 
such that $\sum_{i=1}^N a_i=1$ and $v = \sum_{j=1}^N a_i v_i$. Hence
\[
\langle v_p -v_q,v - v_q \rangle = \langle v_p -v_q, \sum_{i=1}^N a_i (v_i-v_q)
\rangle \geq 0. 
\]
 \end{proof}

 \begin{center}
 \begin{figure}
\centering
\begin{tikzpicture}
  [scale=.8,auto=left,every node/.style={}]
  \node (n4) at (0,6)[circle,draw]  {$v_p$};
  \node (n5) at (2,7)[circle,draw]  {};
  \node (n1) at (4,5)[circle,draw] {$v_q$};
  \node (n2) at (2,4) [circle,draw] {};
  \node (n3) at (0,4) [circle,draw] {};
  \node (n6) at (6.5,7)[circle,draw] {$v_j$}; 
  \node (n7) at (3,8){.};
  \node (n8) at (5,9)  {.};
  \node (n9) at (2,1) {.};
  \node (n10) at (4,2) {$H_{pq}$};
  \foreach \from/\to in {n4/n5,n5/n1,n1/n2,n2/n3,n3/n4,n4/n6}
    \draw (\from) -- (\to);
    \foreach \from/\to in {n7/n8,n7/n9,n9/n10,n10/n8,n4/n1}
    \draw [thin, dash dot] (\from) -- (\to);
\end{tikzpicture}
\caption{\em Illustration of the lemma}
\label{fig-hyperplane}
\end{figure}
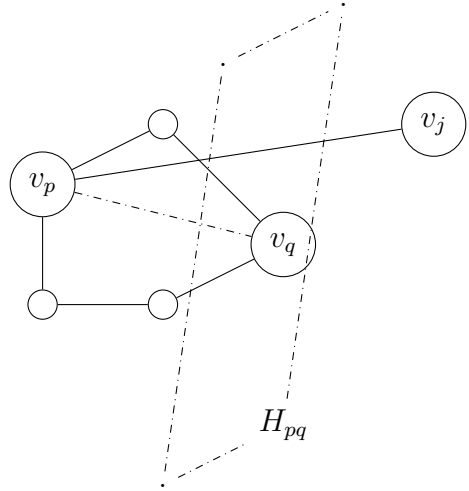
\end{center}

\subsection{Analysis of the open loop}
We shall suppose that Assumption \ref{ass-beta} holds. 
\begin{theorem} \label{thm1}
Let $(x(t),v(t)) \in \real^{Nd} \times \real^{Nd}$ be a $C^1$ solution of the open loop \eqref{eq-flock-steer-open}. At time $t$, let $d_V(t) = \|v_p(t)-v_q(t)\|$. 
Fix an arbitrary $\theta > 0$ and let $\lambda_{pq}(\theta)$ be the number of agents in the common active set $\Lambda_{pq}(\theta)$ associated with the influence matrix $a_{ij}(t)$ of the system. Let $\alpha_0(t) = \min_{i}\alpha_i(t)$. Then for almost all $t$, the diameters of the system, $d_X(t)$, $d_V(t)$ and $d_\beta(t)$ satisfy :
\begin{equation} 
\begin{aligned}
  \frac{d}{dt}d_X(t)&\leq d_V(t)\\ 
  \frac{d}{dt}d_V(t)&\leq -\alpha_0 \lambda_{pq}^2(\theta) \theta^2 d_V(t) + d_\beta(t). \label{thm1b}
\end{aligned}
\end{equation} 
\end{theorem}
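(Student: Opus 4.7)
The plan is to apply Lemma \ref{lem-max-deriv} to reduce each diameter bound to differentiation along an almost-everywhere fixed pair of indices realizing the current diameter. For $d_X(t)$, at a.e.\ $t$ there are indices $i,j$ with $d_X(t)=\|x_i(t)-x_j(t)\|$, and then
\[
\frac{d}{dt}\|x_i-x_j\| = \frac{\langle x_i-x_j,\,v_i-v_j\rangle}{\|x_i-x_j\|} \leq \|v_i-v_j\| \leq d_V(t)
\]
by Cauchy--Schwarz, giving the first inequality with essentially no work.

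For the harder $d_V$ bound, I would fix at a.e.\ $t$ indices $p,q$ with $d_V(t)=\|v_p-v_q\|=:\|w\|$ (if $\|w\|=0$ the claim is trivial), work with $\tfrac12\tfrac{d}{dt}\|w\|^2=\langle w,\dot v_p-\dot v_q\rangle$, and substitute the open-loop equations. The key algebraic step is to pull the common minimum $\alpha_0$ out by writing
\[
\dot v_p-\dot v_q = \alpha_0\bigl[(\overline v_p-v_p)-(\overline v_q-v_q)\bigr] + (\alpha_p-\alpha_0)(\overline v_p-v_p) - (\alpha_q-\alpha_0)(\overline v_q-v_q) + (\beta_p-\beta_q).
\]
Since $\overline v_p-v_p=\sum_j a_{pj}(v_j-v_p)$ and $\overline v_q-v_q=\sum_j a_{qj}(v_j-v_q)$ are convex combinations of vectors pointing into the hull of $\{v_j\}$ from the diameter endpoints, Lemma \ref{lem-hyperplane} (in both directions, swapping the roles of $v_p$ and $v_q$) yields $\langle w,v_j-v_p\rangle\leq 0$ and $\langle w,v_j-v_q\rangle\geq 0$ for every $j$. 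Because $\alpha_p-\alpha_0\geq 0$ and $\alpha_q-\alpha_0\geq 0$, the two $\alpha$-residual terms then contribute non-positively to $\langle w,\dot v_p-\dot v_q\rangle$ and simply drop out.

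The common-$\alpha_0$ part is exactly where Motsch--Tadmor's antisymmetric-matrix lemma enters. Using $\sum_j a_{pj}=\sum_j a_{qj}=1$, I would rewrite $\langle w,\overline v_p-\overline v_q\rangle=\sum_j(a_{pj}-a_{qj})c_j$ with $c_j:=\langle w,v_j-v_q\rangle\in[0,\|w\|^2]$ (the range bounds again coming from Lemma \ref{lem-hyperplane}), then recast this as $\sum_{i,j}(c_i-c_j)a_{pi}a_{qj}=\langle Su,z\rangle$ where $S_{ij}=c_i-c_j$ is antisymmetric with $|S_{ij}|\leq\|w\|^2$ and $u_i=a_{pi}$, $z_j=a_{qj}$ are nonnegative and sum to $1$. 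Lemma \ref{lem-antisym} then delivers $\langle w,\overline v_p-\overline v_q\rangle\leq\|w\|^2(1-\lambda_{pq}^2(\theta)\theta^2)$, so subtracting $\alpha_0\|w\|^2$ gives $\alpha_0\langle w,(\overline v_p-v_p)-(\overline v_q-v_q)\rangle\leq -\alpha_0\lambda_{pq}^2(\theta)\theta^2\|w\|^2$. Bounding $\langle w,\beta_p-\beta_q\rangle\leq\|w\|\,d_\beta$ by Cauchy--Schwarz and dividing through by $\|w\|$ produces the stated inequality. The main obstacle is that distinct $\alpha_i$'s would ordinarily break the antisymmetric-matrix identity, which is why the $\alpha_0$ extraction combined with the sign information from Lemma \ref{lem-hyperplane} is essential: it is what lets the excess $\alpha_p-\alpha_0$ and $\alpha_q-\alpha_0$ be absorbed harmlessly while the Motsch--Tadmor estimate still applies to the symmetric remainder.
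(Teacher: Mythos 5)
Your proof is correct and follows essentially the same route as the paper: a.e.\ differentiation of the squared diameters via Lemmas \ref{lem-loclip-abscont} and \ref{lem-max-deriv}, Lemma \ref{lem-hyperplane} to discard the sign-definite residual terms arising from unequal $\alpha_i$, and Lemma \ref{lem-antisym} applied to $S_{ij}=\langle v_p-v_q,v_i-v_j\rangle$ for the main term. The only cosmetic difference is that you extract the global minimum $\alpha_0$ directly and kill both residuals with the hyperplane lemma, whereas the paper assumes WLOG $\alpha_p\leq\alpha_q$, extracts $\alpha_p$, discards a single residual, and passes to $\alpha_0$ at the end.
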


\begin{remark}
In Theorem \eqref{thm1}, we note that $p$ and $q$ are functions of $t$, and so is $\lambda_{pq}(\theta)$. 
This theorem is a generalization of Theorem 3.4 of \cite{motsch2011new} where $\alpha_i(t)$ was independent of $i$ and $t$. One needs Lemma \ref{lem-hyperplane} to handle the extra terms that appear in our analysis.
\end{remark}

\begin{proof}
By Lemma \ref{lem-loclip-abscont} $d_X(t)$ is absolutely continuous.
We choose $i=i(t)$ and $j=j(t)$ such that 
$d_X(t) = \|x_i(t)-x_j(t)\|$ for all $t$. 
Using Lemma \ref{lem-max-deriv}  we obtain
\[
\left|\frac{d}{dt} (d_X(t))^2 \right| =  \left|\frac{d}{dt}\|x_i - x_j\|^2\right|  = 2|\langle x_i - x_j,v_i -
 v_j\rangle|.
\]
Hence
\[
\left|2\|x_i -x_j\|\frac{d}{dt}\|x_i - x_j\|  \right| = 2|\langle x_i - x_j,v_i - v_j\rangle| \leq 2\|x_i -x_j\|\|v_i -v_j\|.
\]
This yields that for almost all $t$
\[
  \frac{d}{dt}d_X(t) \leq \|v_i - v_j\| \leq d_V(t).
\]
Note that if for some $t>0$, $d_X(t)=0$ and $d_X(t)$ is differentiable, then $\frac{d}{dt}d_X(t)=0$. 

For the second inequality, we again proceed by using Lemmas \ref{lem-loclip-abscont} and \ref{lem-max-deriv}. Let $p=p(t)$ and $q=q(t)$ be such that  $d_V(t) = \|v_p-v_q\|$ for all $t$. Then (for almost all $t$)
\[
\begin{aligned}
\frac{d}{dt}(d_V(t))^2 &= \frac{d}{dt}(\|v_p -v_q\|^2) = 2\langle v_p - v_q , \dot{v}_p - \dot{v}_q\rangle\\
& = 2\langle v_p -v_q,\alpha_p(\overline{v}_p - v_p) - \alpha_q(\overline{v}_q
- v_q)\rangle + 2\langle v_p - v_q,\beta_p - \beta_q\rangle\\
&=2\alpha_p\langle v_p -v_q,\overline{v}_p - v_p\rangle - 2\alpha_q\langle v_p -v_q,\overline{v}_q - v_q\rangle + 2\langle v_p - v_q,\beta_p - \beta_q\rangle.
\end{aligned} 
\]
We proceed by assuming WLOG that $ \alpha_p \leq \alpha_q$ and write
\[
\begin{aligned}
\frac{d}{dt}(\|v_p -v_q\|^2) &=2\alpha_p \langle v_p -v_q,\overline{v}_p - \overline{v}_q\rangle - 2\alpha_p\|v_p - v_q\|^2\\
& -2(\alpha_q -\alpha_p)\langle v_p -v_q,\overline{v}_q - v_q\rangle + 2\langle v_p - v_q,\beta_p - \beta_q\rangle.
\end{aligned} 
\]
Using Lemma \eqref{lem-hyperplane}, Cauchy-Schwartz inequality and the fact that $\|\beta_p-\beta_q\| \leq d_\beta$, we have:
\[
\begin{aligned}
\frac{d}{dt}(\|v_p -v_q\|^2)
&\leq 2\alpha_p \langle v_p -v_q,\overline{v}_p - \overline{v}_q\rangle - 2\alpha_p\|v_p - v_q\|^2 + 2\|v_p - v_q\| d_{\beta}. 
\end{aligned} 
\]
Moreover
\[
 \begin{aligned}
 	\overline{v}_p - \overline{v}_q &= \sum_{j=1}^N a_{pj}v_j - \overline{v}_q\ = \sum_{j=1}^N a_{pj}(v_j - \overline{v}_q)\\
 &= \sum_{j=1}^N a_{pj}(v_j - \sum_{i=1}^N a_{qi}v_i)
 	                                =\sum_{i,j}^N a_{pj}a_{qi}(v_j - v_i).\\
 	\end{aligned}
\]
Hence
\[
\begin{aligned}
\frac{d}{dt}(\|v_p -v_q\|^2)
&\leq 2\alpha_p\sum_{i,j}^N a_{pj}a_{qi}\langle v_p -v_q,v_j - v_i\rangle - 2\alpha_p\|v_p - v_q\|^2 + 2 d_\beta \|v_p-v_q\|. 
\end{aligned} 
\]
Now we use Lemma \eqref{lem-antisym} with $u_i = a_{pi}$, $w_i = a_{qi}$, and the anti-symmetric matrix 
\[
S_{ij} = \langle v_p - v_q,v_i - v_j\rangle.
\]
Since $|S_{ij}| \leq d_V^2$, we have
\[
\left|\sum_{i,j}^N a_{pj}a_{qi}\langle v_p -v_q,v_j - v_i\rangle \right| \leq d_V^2(1 - \lambda_{pq}^2(\theta)\theta^2).
\]
Therefore we have
 \[
  \frac{d}{dt}(\|v_p -v_q\|^2)
     \leq 2\alpha_p d_V^2(1 - \lambda_{pq}^2(\theta)\theta^2) - 2\alpha_p\|v_p - v_q\|^2 + 2 d_\beta \|v_p-v_q\|.
 \]
Noting that $v_p$ and $v_q$ are such that $\|v_p(t) -v_q(t)\| = d_V(t)$ 
and that $\alpha_p(t) \geq \alpha_0(t)$ by definition, we have
 \[
 \frac{d}{dt}(d_V(t)^2)
     \leq - 2\alpha_0 d_V^2 \lambda_{pq}^2(\theta)\theta^2 + 2 d_\beta d_V.
\]
An argument similar to the one used in deriving the first inequality proves
\eqref{thm1}.
\end{proof} 
The following corollary is immediate.
\begin{corollary}\label{corr-thm1}
If $\lambda(\theta)$ is the number of elements in the global active 
set $\Lambda(\theta)$ and if $\underline{\alpha}$ denotes 
the infimum of $\alpha_i(t)$ over $i$ and $t \geq 0$ then 
\begin{subequations}
\begin{align}
	\frac{d}{dt}d_X(t)&\leq d_V(t), \label{corr1a} \\
	\frac{d}{dt}d_V(t)&\leq -\underline{\alpha} \lambda(\theta)\theta^2d_V(t) + d_\beta(t).\label{corr1b}
\end{align}      
\end{subequations} 
\end{corollary}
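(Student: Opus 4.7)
I would obtain Corollary \ref{corr-thm1} as an immediate specialization of Theorem \ref{thm1}, with essentially no new calculation beyond two monotonicity observations. The position inequality \eqref{corr1a} is literally the first inequality of Theorem \ref{thm1}, so no work is needed there; all of the action is in the velocity inequality \eqref{corr1b}.

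For \eqref{corr1b}, the two key observations are the following. First, since $\Lambda(\theta)=\bigcap_p \Lambda_p(\theta)\subseteq \Lambda_p(\theta)\cap \Lambda_q(\theta)=\Lambda_{pq}(\theta)$ for every pair $(p,q)$, we have $\lambda(\theta)\leq \lambda_{pq}(\theta)$ at every time $t$, regardless of how $p$ and $q$ are selected as functions of $t$. Second, by definition $\underline{\alpha}=\inf_{i,\, t\geq 0}\alpha_i(t)\leq \min_i \alpha_i(t)=\alpha_0(t)$ for every $t$. Since $d_V(t)\geq 0$ and $\theta^2\geq 0$, combining the two observations yields
\[
-\alpha_0(t)\,\lambda_{pq}^2(\theta)\,\theta^2\, d_V(t)\;\leq\; -\underline{\alpha}\,\lambda^2(\theta)\,\theta^2\, d_V(t),
\]
so Theorem \ref{thm1} immediately implies $\frac{d}{dt}d_V(t)\leq -\underline{\alpha}\,\lambda^2(\theta)\,\theta^2\, d_V(t)+d_\beta(t)$ for almost all $t$.

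To recover the exact form of \eqref{corr1b} as stated (with $\lambda(\theta)$ rather than $\lambda^2(\theta)$), I would use that $\lambda(\theta)$ is a nonnegative integer: if $\lambda(\theta)=0$ then the dissipation term vanishes and the bound $\frac{d}{dt}d_V\leq d_\beta$ is already contained in Theorem \ref{thm1}; if $\lambda(\theta)\geq 1$ then $\lambda(\theta)\leq \lambda^2(\theta)$, so the tighter $\lambda^2$-bound from the previous paragraph implies the $\lambda$-bound of \eqref{corr1b}. I do not foresee any real obstacle here, as the only subtlety — the time dependence of $p$ and $q$ — is already absorbed in the almost-everywhere statement of Theorem \ref{thm1}, and the monotonicity arguments used are entirely pointwise in $t$.
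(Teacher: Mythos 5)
Your proof is correct and takes essentially the same route as the paper, which simply declares the corollary immediate from Theorem \ref{thm1} for exactly the two monotonicity reasons you give: $\Lambda(\theta)\subseteq\Lambda_{pq}(\theta)$ so $\lambda(\theta)\leq\lambda_{pq}(\theta)$, and $\underline{\alpha}\leq\alpha_0(t)$ for all $t$. Your extra step passing from $\lambda^2(\theta)$ to $\lambda(\theta)$ is well placed: the exponent in \eqref{corr1b} appears to be a typographical slip (the paper later uses the $\lambda^2$ form, as $N^2\psi(d_X(t))$, in the proof of Theorem \ref{thm-closed-flock}), and your argument via $\lambda(\theta)\leq\lambda^2(\theta)$ for nonnegative integers covers the statement as printed.
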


\subsection{Analysis of the closed loop}
We shall suppose that Assumptions \ref{ass-xi}, \ref{ass-phi} and \ref{ass-beta} hold. 
As observed in Section \ref{sec-exist-unique} these Assumptions guarantee existence and uniqueness of a solution to the closed-loop equations on the time interval $[0,\infty)$. Moreover, this solution is $C^1$ in time $t$. We note that, if all steering terms $\beta_i$ are equal for all $t$, then $d_\beta(t)=0$ and the system of inequalities given by \eqref{corr1a} and \eqref{corr1b} show that the diameter $d_V$ is decreasing in time. Even in this case, in order to show flocking, one needs stronger inequalities. To that end, we shall modify the ideas from Ha et al \cite{ha2009simple} and also from Motsch and Tadmor \cite{motsch2011new} in order to prove the flocking results. 
We define the function $\psi:[0,\infty) \to (0,\infty)$ by 
\begin{equation}\label{eq-psi}
\psi(r) = \min_{1 \leq i,j \leq N} \min \{ \phi_{ij}(x;u) \, | \, \|x_l-x_k\| \leq r, \; u \in \bar{B}^d \text{ and }\; 1\leq l,k\leq N\}. 
\end{equation}
In order to see that the minimum exists, we observe that by shift invariance 
(Assumption \ref{ass-phi}), 
\[
\begin{aligned}
\{ \phi_{ij}(x;u) \, &| \, \|x_l-x_k\| \leq r, \; u \in \bar{B}^d\} \\
&= \{ \phi_{ij}(x;u) \, | \, x_1=0, \|x_l-x_k\| \leq r, \; u \in \bar{B}^d\}
\end{aligned}
\]
and that 
\[
\{ (x,u) \in \real^{Nd}\times \bar{B}^d \, | \, x_1=0, \|x_l-x_k\| \leq r, \; u
\in \bar{B}^d\}
\]
is a compact set and that $\phi_{ij}$ are continuous. Since $\phi_{ij}$ are strictly positive by Assumption \ref{ass-phi}, it follows that $\psi$ is strictly positive.  Moreover, it is also clear that $\psi$ is a decreasing (non-increasing) function. Since $\psi$ is decreasing, it is also positive and measurable, and hence $\int_0^{r_0} \psi(r) dr < \infty$ and $\int_0^\infty \psi(r) dr \leq \infty$ are well-defined.  
\begin{lemma}\label{lem-alpha-bar}
Let $\underline{\alpha}$ be the infimum of $\alpha_i(t)$ over $i$ and $t \geq 0$.
Suppose that
\[
\int_0^\infty d_\beta(t) \, dt < \infty.
\]
Then $\underline{\alpha}>0$.
\end{lemma}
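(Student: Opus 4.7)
The plan is to show that $d_V(t)$ stays uniformly bounded in time, which forces $\overline{v}_i - v_i$ into a compact set; continuity and strict positivity of $\xi_i$ on this compact set then give the desired uniform positive lower bound on $\alpha_i$.

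First I would invoke Corollary \ref{corr-thm1} to obtain, for almost all $t$,
\[
\frac{d}{dt} d_V(t) \;\leq\; -\,\underline{\alpha}\,\lambda(\theta)\theta^2\, d_V(t) + d_\beta(t).
\]
Since $\alpha_i(t) \geq 0$ for every $i$ and $t$ (from Assumption \ref{ass-xi}, which makes $\xi_i$ strictly positive), we have $\underline{\alpha} \geq 0$, and so the first term on the right is non-positive. Dropping it and using absolute continuity of $d_V$ (Lemma \ref{lem-loclip-abscont}), integration from $0$ to $t$ yields
\[
d_V(t) \;\leq\; d_V(0) + \int_0^t d_\beta(s)\,ds \;\leq\; d_V(0) + \int_0^\infty d_\beta(s)\,ds \;=:\; D < \infty,
\]
by hypothesis. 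Note that this bootstrap argument is not circular: we only use $\underline{\alpha} \geq 0$ here, not the strict positivity we are trying to establish.

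Next, for each agent $i$ and each $t$, since $\overline{v}_i = \sum_j a_{ij} v_j$ with $a_{ij} \geq 0$ and $\sum_j a_{ij} = 1$, we can write
\[
\overline{v}_i(t) - v_i(t) \;=\; \sum_{j=1}^N a_{ij}(t)\bigl(v_j(t) - v_i(t)\bigr),
\]
and hence $\|\overline{v}_i(t) - v_i(t)\| \leq \sum_j a_{ij}(t)\,\|v_j(t) - v_i(t)\| \leq d_V(t) \leq D$. Thus the argument $\overline{v}_i(t) - v_i(t)$ of $\xi_i$ lies in the closed ball $\bar{B}(0,D) \subset \real^d$ for all $t \geq 0$ and all $i$.

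Finally, by Assumption \ref{ass-xi} each $\xi_i$ is continuous and strictly positive, so it attains a positive minimum $m_i > 0$ on the compact set $\bar{B}(0,D)$. Setting $m := \min_{1\leq i \leq N} m_i > 0$, we obtain
\[
\alpha_i(t) \;=\; \xi_i\bigl(\overline{v}_i(t) - v_i(t)\bigr) \;\geq\; m \qquad \text{for all } i \text{ and all } t \geq 0,
\]
so $\underline{\alpha} \geq m > 0$, as claimed. The only subtle point—and the one I would flag as the main thing to verify carefully—is the non-circular use of $\underline{\alpha} \geq 0$ in the initial differential inequality; everything else is a direct compactness-plus-continuity argument.
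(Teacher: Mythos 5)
Your proof is correct and follows essentially the same route as the paper's: bound $d_V(t)$ by $d_V(0)+\int_0^\infty d_\beta(s)\,ds$ via the differential inequality of Corollary \ref{corr-thm1}, observe that $\overline{v}_i-v_i$ then lies in a fixed compact ball because $\overline{v}_i$ is a convex combination of the $v_j$, and conclude by continuity and strict positivity of the $\xi_i$ on that ball. Your explicit remark that only $\underline{\alpha}\geq 0$ is needed in the first step is a worthwhile clarification the paper leaves implicit, but it does not change the argument.
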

\begin{proof}
Let $M$ be defined by
\[
M = d_V(0) + \int_0^\infty d_\beta(t) \, dt.
\]
Then from \eqref{corr1b} it follows that $d_V(t) \leq M$ for all $t \geq 0$.
Hence, for all $t \geq 0$ and for all $1 \leq i \leq N$, $\|\overline{v}_i(t)-v_i(t)\| \leq d_V(t) \leq M$,
where we have used the fact that $\overline{v}_i$ is in the convex hull 
of all velocities $v_j$.
Now 
\[
\begin{aligned}
\underline{\alpha} &= \inf\{\; \xi_i(\overline{v}_i(t)-v_i(t)) \; | \; t \geq 0, \; 1
\leq i \leq N \},\\
&\geq \min\{\; \xi_i(u) \; | \; 0 \leq \|u\| \leq M, \; 1 \leq i \leq N\} >0,
\end{aligned}
\]
where we have used the fact that $\xi_i$ is continuous by Assumption \ref{ass-xi}.
\end{proof}

\begin{theorem}\label{thm-closed-flock}
Consider the closed loop system \eqref{eq-flock-steer-closed}. Suppose $\psi$ is
defined by \eqref{eq-psi} and that 
\[
\int_0^\infty d_\beta(t) \, dt < \infty \quad \text{and} \quad \lim_{t\to \infty} d_{\beta}(t) = 0.
\]
Further suppose that the initial diameters satisfy
\begin{equation}\label{eq-dV0-bound}
d_V(0) + \int_0^\infty d_\beta(t) \, dt < \underline{\alpha} N^2 \int_{d_X(0)}^{\infty} \psi(s)ds.
\end{equation}
Then the solution $(x(t),v(t))$ flocks. In particular, 
if $\int^\infty \psi(s)\,ds = \infty$, then the  condition on initial diameters is always satisfied.
\end{theorem}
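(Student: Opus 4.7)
The plan is to adapt the Ha--Liu--Motsch--Tadmor Lyapunov strategy to the closed-loop system with steering, building on Corollary \ref{corr-thm1} with a position-dependent choice of the active-set threshold $\theta$.

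First, I would verify that the right-hand side of \eqref{eq-dV0-bound} is a meaningful positive quantity. The hypothesis $\int_0^\infty d_\beta(t)\,dt < \infty$ triggers Lemma \ref{lem-alpha-bar}, giving $\underline{\alpha} > 0$. By the definition of $\psi$, every entry of the normalized influence matrix satisfies $\phi_{ij}(x(t);u_i(t)) \geq \psi(d_X(t))$, and so taking the time-dependent threshold $\theta(t) = \psi(d_X(t))$ in Corollary \ref{corr-thm1} places every agent in the global active set, so $\lambda(\theta(t)) = N$. Substituting this back into the corollary yields a self-contained differential inequality of the form
\[
\frac{d}{dt}d_V(t) \;\le\; -\underline{\alpha}\, N^{2}\, \Psi\!\bigl(d_X(t)\bigr)\, d_V(t) \;+\; d_\beta(t),
\]
paired with $\frac{d}{dt}d_X(t) \leq d_V(t)$, where $\Psi$ is the appropriate power of $\psi$ produced by tracking the $\lambda^2\theta^2$ coefficient inherited from Theorem \ref{thm1}; the precise form is whatever one integrates against in the Lyapunov functional below.

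The core of the proof is the Lyapunov-type functional
\[
\mathcal{L}(t) \;=\; d_V(t) \;+\; \underline{\alpha}\, N^{2} \int_{d_X(0)}^{d_X(t)} \Psi(s)\,ds \;-\; \int_0^t d_\beta(s)\,ds.
\]
Because $\Psi \geq 0$ and $\frac{d}{dt}d_X \leq d_V$, the two differential inequalities above combine to give $\frac{d\mathcal{L}}{dt} \le 0$ almost everywhere; integrating and using $d_V(t)\geq 0$ yields
\[
\underline{\alpha}\, N^{2} \int_{d_X(0)}^{d_X(t)} \Psi(s)\,ds \;\le\; d_V(0) + \int_0^\infty d_\beta(s)\,ds.
\]
The strict hypothesis \eqref{eq-dV0-bound} then precludes $d_X(t)$ from ever reaching a value $X^\ast$ at which the left-hand side saturates the right-hand side (by monotone convergence applied to the integral of the strictly positive $\Psi$), so $\sup_{t\geq 0} d_X(t) \leq X^\ast < \infty$, which is the first half of flocking \eqref{eq-flock}.

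With $d_X$ uniformly bounded, monotonicity of $\psi$ gives $\psi(d_X(t)) \geq \psi(X^\ast) =: \psi^\ast > 0$, so the inequality for $d_V$ becomes a linear differential inequality with a feedback coefficient bounded below by a positive constant and forcing $d_\beta(t) \to 0$. A standard integrating-factor estimate (splitting the forcing into a tail where $d_\beta$ is uniformly small and handling the initial interval via boundedness) then yields $d_V(t) \to 0$, completing the verification of both flocking conditions. The main obstacle I anticipate is the Lyapunov step itself: since $d_X$ need not be monotone and $\frac{d}{dt}d_X \leq d_V$ is only a one-sided bound, one must exploit the non-negativity of $\Psi$ to convert that one-sided bound into a one-sided bound on $\mathcal{L}'$. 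The absolute continuity of $d_X$ and $d_V$ furnished by Lemma \ref{lem-loclip-abscont} is what legitimizes the almost-everywhere derivative manipulations and their integration.
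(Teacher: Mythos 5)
Your overall strategy is the same as the paper's: pick a position-dependent active-set threshold so that the global active set is all of $\{1,\dots,N\}$, obtain the differential inequality $\frac{d}{dt}d_V \le -\underline{\alpha}N^2\Psi(d_X)d_V + d_\beta$, run the Lyapunov functional to force a uniform bound $d_X(t)\le d_*$, and then use $\psi(d_X)\ge\psi(d_*)>0$ together with an integrating-factor argument and $d_\beta(t)\to 0$ to conclude $d_V(t)\to 0$. All of those steps, including the appeal to Lemma \ref{lem-alpha-bar} for $\underline{\alpha}>0$ and the use of Lemmas \ref{lem-loclip-abscont} and \ref{lem-max-deriv} to legitimize the almost-everywhere differentiation, match the paper's proof.

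The genuine gap is that you never pin down $\Psi$, and the theorem's hypothesis \eqref{eq-dV0-bound} is stated with $\int\psi$, not with ``whatever one integrates against.'' With your stated choice $\theta(t)=\psi(d_X(t))$, Lemma \ref{lem-antisym} gives the decay coefficient $\lambda^2\theta^2=N^2\psi^2(d_X)$, so your Lyapunov functional involves $\int\psi^2$; since the influences are normalized ($\sum_j\phi_{ij}=1$) one has $\psi\le 1/N<1$, hence $\int_{d_X(0)}^{\infty}\psi^2(s)\,ds\le\int_{d_X(0)}^{\infty}\psi(s)\,ds$, and the stated hypothesis \eqref{eq-dV0-bound} does \emph{not} imply the smallness condition your argument actually needs. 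As written, your proposal proves a flocking theorem under a condition on $\int\psi^2$ rather than the stated condition on $\int\psi$. The paper closes this by instead taking $\theta(t)=\sqrt{\psi(d_X(t))}$, which makes $\lambda^2\theta^2=N^2\psi(d_X)$ and matches \eqref{eq-dV0-bound} exactly --- but be aware that this choice requires $a_{pj}\ge\sqrt{\psi(d_X)}$ for all $p,j$, which is strictly stronger than the bound $a_{pj}\ge\psi(d_X)$ supplied by the definition \eqref{eq-psi}, so you cannot simply adopt it without additional justification. Either way, you must commit to a specific threshold and verify that the resulting integrand is the one appearing in \eqref{eq-dV0-bound}; leaving the power of $\psi$ unresolved leaves the key hypothesis of the theorem unverified.
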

\begin{proof}
At any given time, by choosing $\theta(t) = \sqrt{\psi(d_X(t))}$, one readily obtains that the number of elements in the global active set is $N$, and hence the inequality 
\[
\frac{d}{dt} d_V(t) \leq -\underline{\alpha} N^2 \psi(d_X(t)) d_V(t) + d_\beta(t).
\]
 We define the energy functional
$\mathcal{E}: \real^{Nd} \times \real^{Nd} \to \real$ by
\begin{equation}
\mathcal{E}(d_X(t),d_V(t)) = d_V(t) + \underline{\alpha}N^2\int_0^{d_X(t)}\psi(s)ds.
\end{equation}
The time derivative of the energy functional satisfies
\[
\dot{\mathcal{E}} = \dot{d}_V + \underline{\alpha}N^2d_V\psi(d_X) \leq d_{\beta}.
\]
Hence
\[
 \mathcal{E}(d_V(t),d_X(t)) - \mathcal{E}(d_V(0),d_X(0)) \leq \int_0^t d_{\beta}(s)ds,
\]
which implies
\[
 d_V(t) - d_V(0) \leq -\underline{\alpha}N^2\int_0^{d_X(t)}\psi(s)ds +\underline{\alpha}N^2\int_0^{d_X(0)}\psi(s)ds  +\int_0^t d_{\beta}(s)ds.
\]
We deduce that
\begin{equation}\label{equation-diameter}
 d_V(t) - d_V(0) \leq \underline{\alpha}N^2\int_{d_X(t)}^{d_X(0)}\psi(s)ds + \int_0^t d_{\beta}(s)ds.
 \end{equation}
By the assumption \eqref{eq-dV0-bound}, there exists $d_*$ (independent of $t$) such that
\begin{equation}
 \int_0^{\infty}d_{\beta}(t) \, dt + d_V(0)\leq\underline{\alpha}N^2\int_{d_X(0)}^{d_*}\psi(s)ds. 
\end{equation}
Replacing this inequality in \eqref{equation-diameter}, we obtain that
\[
 d_V(t) \leq \underline{\alpha}N^2\int_{d_X(t)}^{d_X(0)}\psi(s)\,ds + \underline{\alpha}N^2\int_{d_X(0)}^{d_*}\psi(s)\,ds 
 \leq \underline{\alpha}N^2\int_{d_X(t)}^{d_*}\psi(s)\,ds.
\]
Since $d_V(t)\geq 0$, we have that the diameter in the position space is uniformly bounded. That is, $d_X(t)\leq d_*$ for all $t\geq 0 $. 
Defining $\psi_* = \psi(d_*)$, we note that $\psi(s) \geq \psi_*$ for $s \in [0,d_*]$. Using the inequality
\[
\frac{d}{dt}d_V(t) \leq -\underline{\alpha}N^2\psi(d_X(t))d_V + d_{\beta},
\]
we have that
\[
  \frac{d}{dt} d_V(t)\leq -\underline{\alpha}N^2\psi_*d_V + d_{\beta}.
\]
Hence
 \[ \label{d_V(t)-solution}
   d_V(t) \leq e^{-\underline{\alpha}N^2\psi_*t}d_V(0) + \int_0^t e^{-\underline{\alpha}N^2\psi_*(t-s)}d_{\beta}(s) ds.
 \]
Now let us show that the velocity diameter goes to zero asymptotically. The first term above goes to zero asymptotically in time. The second term can be written 
as 
\[
\frac{\int_0^t e^{\underline{\alpha}N^2\psi_* s} \, d_\beta(s) \, ds}{e^{\underline{\alpha}N^2\psi_* t}}. 
\]
There are two cases. If
\[
 \lim_{t \to \infty} \int_0^t e^{\underline{\alpha}N^2\psi_*s} \, d_{\beta}(s) \, ds < \infty,
\]
then this second term clearly limits to zero. On the other hand, the limit above is infinity and hence an application of L'Hospital's rule and the hypothesis that $\lim_{t\to \infty} d_{\beta}(t) = 0$ shows that
\[
\lim_{t \to \infty} \frac{\int_0^te^{\underline{\alpha}N^2\psi_*s}d_{\beta}(s)\,ds}{e^{\underline{\alpha}N^2\psi_*t}}
  =  \lim_{t \to \infty} \frac{e^{\underline{\alpha}N^2\psi_*t}d_{\beta}(t)}{e^{\underline{\alpha}N^2\psi_*t}}
 = \lim_{t\to \infty} d_{\beta}(t) = 0.
\]
\end{proof}

\section{Study of fast flocking with slow steering via singular perturbation approach} \label{flock-steer-pertubation-approach}

We consider the model given by \eqref{eq-flock-steer-closed} and investigate the scenario where flocking is much faster than steering. In the singular perturbation approach, we capture this by the introduction of a small parameter $\epsilon$. For simplicity, we ignore the orientation bias and assume that $a_{ij}=\phi_{ij}(x)$. 
This leads us to the family of equations
\begin{equation}\label{eq-closed-loop-epsilon}
   \begin{aligned}
   \dot{x}_i &= v_i, \\
   \dot{v}_i &= \frac{\alpha_i}{\epsilon} (\overline{v}_i-v_i) +\beta_i,\\
   \overline{v}_i &= \sum_{j=1}^N \phi_{ij}(x) \, v_j,\\
   \alpha_i &= \xi_i(\overline{v}_i-v_i)\quad \forall i =1,\dots, N.
   \end{aligned}
\end{equation} 
Here, $0 < \epsilon \ll 1$ is a parameter that allows the model to flock rapidly.

Let $x_i(t,\epsilon)$ and $v_i(t,\epsilon)$ for all $i = 1,\dots,N$ be the solution of our new model \eqref{eq-closed-loop-epsilon}. We expand these solutions and some related variables of the model in a power series in $\epsilon$:
 \begin{equation}\label{eq-expansion}
 \begin{aligned}
	  x_i(t,\epsilon) &= x_{i,0}(t) + \epsilon x_{i,1}(t) + \dots, \\
 	 v_i(t,\epsilon) &= v_{i,0}(t) + \epsilon v_{i,1}(t) + \dots, \\
  	\alpha_i(t,\epsilon) &= \alpha_{i,0}(t) + \epsilon\alpha_{i,1}(t) +\dots,\\
 	 \overline{v}_i(t,\epsilon) &= \overline{v}_{i,0}(t) + \epsilon\overline{v}_{i,1}(t) +\dots,\\
 	\beta_i(t,\epsilon) &= \beta_{i,0}(t) + \epsilon \beta_{i,1}(t) + \dots. 
\end{aligned}
\end{equation}

\subsection{Leading order behavior}
We shall use $x_0(t)$ to denote $$(x_{1,0}(t),\dots,x_{N,0}(t)),$$ 
and likewise $v_0(t)$ and $\beta_0(t)$. We are interested in characterizing the leading order terms $x_0(t)$ and $v_0(t)$. In what follows, we frequently omit showing the dependence on time for brevity.  Substituting the expansions \eqref{eq-expansion} into \eqref{eq-closed-loop-epsilon} we obtain
 \begin{equation} \label{expansion-x-v}
 \begin{aligned}
   \dot{x}_{i,0}+\epsilon \dot{x}_{i,1}+\dots & = v_{i,0} +\epsilon v_{i,1}+\dots,\\ 
   \dot{v}_{i,0}+\epsilon\dot{v}_{i,1}+\dots &=\frac{1}{\epsilon}(\alpha_{i,0}+\epsilon \alpha_{i,1}+\dots)((\overline{v}_{i,0} - v_{i,0}),\\
   &+\epsilon (\overline{v}_{i,1} - v_{i,1})+\dots)+\beta_{i,0}(t) + \epsilon \beta_{i,1}(t) + \dots. 
\end{aligned}
\end{equation}
Furthermore we obtain
\begin{equation}\label{eq-vbar0and1}
  \begin{aligned}
    \overline{v}_{i,0}&= \sum_{j=1}^N\phi_{ij}(x_0) \, v_{j,0},\\
    \overline{v}_{i,1} & = \sum_{j=1}^N\phi_{ij}(x_0)v_{j,1} 
    				 + \sum_{j=1}^N\left\{\sum_{l=1}^N\sum_{k=1}^d \frac{\partial\phi_{ij}}{\partial x_l^k}(x_0) \, x_{l,1}^k \right\} v_{j,0}.
    \end{aligned}				 
  \end{equation}
We note that $x^k_i$ and $v^k_i$ are the $k$th components of the $i$th agent's position and velocity. Also $x^k_{i,0}$ and $x^k_{i,1}$ denote the leading order and the next order terms of $x^k_i$ and likewise for $v^k_{i,0}$ and  $v^k_{i,1}$.     
Balancing the terms of order $\epsilon ^{-1}$ in \eqref{expansion-x-v}, we obtain that
\begin{equation}
  \alpha_{i,0}(t)(\overline{v}_{i,0}(t)-v_{i,0}(t)) =0. \label{expan-pert-epsilon-1}
 \end{equation}  
This means that $\alpha_{i,0} = 0$ or $ \overline{v}_{i,0} - v_{i,0}=0$. since $ \underline{\alpha} = \min{\alpha_i} > 0$, we have that $ \overline{v}_{i,0} = v_{i,0}$.  Therefore 
 \[
 \sum_{j=1}^N \phi_{ij}(x_0)v_{j,0} = v_{i,0},
 \]
 and hence
\[
\sum_{j=1}^N \phi_{ij}(x_0)v_{j,0}^k = v_{i,0}^k,
\] where we use the superscript to denote the $k$th component of the velocity.
 Fixing a component $1 \leq k \leq d$ and writing the previous equation for all agents, we obtain
\begin{equation}\label{eq-Pv-v}
 	P(t) \, v_{0}^k(t) = v_{0}^k(t),
\end{equation}
where the matrix $P$ is given by:
\begin{equation}\label{eq-P}
P = 
   \begin{bmatrix}
	\phi_{11}(x_0)  &\cdots &\phi_{1N}(x_0) \\
	\vdots  & \ddots & \vdots\\
	\phi_{N1}(x_0) &\cdots &\phi_{NN}(x_0)
  \end{bmatrix},
\end{equation}
  and  $v^k = (v_{1,0}^k, \dots,v_{N,0}^k) \quad \forall  k = 1,\dots,d$.  
Since $P_{ij}=\phi_{ij}>0$ and 
\[
\sum_{j=1}^N P_{ij} =1,
\]
the matrix $P$ is a stochastic matrix. 
Since $P_{ij}>0$ for all $i,j$, $P$ has eigenvector $e = (1,\dots,1)^t$ corresponding to the eigenvalue $1$ of multiplicity one. Thus for each $k= 1,\dots,d$, 
\eqref{eq-Pv-v} has a unique solution for $v^k$ which is a multiple of $e=(1,\dots,1)^t$. This shows 
that $v_{i,0}(t)$ are all equal for $i=1,\dots,N$,
indicating flocking. We shall denote this flocking velocity by $v^f(t)$. 
  
Balancing the terms of order $\epsilon^0$ in \eqref{expansion-x-v} gives the system
  \begin{equation}\label{eq-eps0}
   \begin{aligned}
 	\dot{x}_{i,0}(t)& = v_{i,0}(t), \\ 
     \dot{v}_{i,0}(t) &= \alpha_{i,1}(\overline{v}_{i,0}(t)-v_{i,0}(t))
     +\alpha_{i,0}(\overline{v}_{i,1}(t)-v_{i,1}(t))+ \beta_{i,0}(t). 
   \end{aligned}
 \end{equation}
Since $v_{i,0}=v^f$ for all $i$, it follows that $\overline{v}_{i,0}= v^f$ for all $i$, and hence, from \eqref{eq-vbar0and1} we obtain that
\[
    \overline{v}_{i,1} 
    	 =\sum_{j=1}^N\phi_{ij}(x_0)v_{j,1} + \sum_{j=1}^N\left\{\sum_{l=1}^N\sum_{k=1}^d \frac{\partial\phi_{ij}}{\partial x_l^k}(x_0) \, x_{l,1}^k \right\} \, v^f.
\]
We change the order of the summation in the second term  and use
  the condition $\sum_{j =1}^N \phi_{i,j}(x) = 1$ to obtain that
\[
  \sum_{j=1}^N\left\{\sum_{l=1}^N\sum_{k=1}^d \frac{\partial}{\partial x_l^k}\phi_{ij}(x_0) \, x_{l,1}^k \right\}v^f = 
  	  \sum_{l=1}^N\left\{\sum_{k=1}^N x^k_{l,1} \, \frac{\partial }{\partial x_l^k}\left(\sum_{j=1}^d \phi_{i,j}(x_0)\right)\right\}v^f =0.
  \]
Thus $$  \overline{v}_{i,1}  = \sum_{j=1}^N\phi_{ij}(x_0)v_{j,1}.$$ 
Substituting these results in equation \eqref{eq-eps0}, we have that for each $i$
 \begin{equation}\label{accelaration-aft-flock}
  \dot{v}^f= \alpha_{i,0}(\overline{v}_{i,1}-v_{i,1})+ \beta_{i,0}. 
\end{equation}
 
From the first equation of \eqref{eq-eps0} we have that
 $$\dot{x}_{i,0}= v_{i,0}=v^f.$$
This implies that for each $i$
\begin{equation} \label{conf-pert-approach}
  x_{i,0}(t) = x_{i,0}(0) + \int_0^t v^f(s)\, ds.
\end{equation}
Hence for all $i$ and $j$
 \begin{equation}
  x_{i,0}(t) - x_{j,0}(t) = x_{i,0}(0) - x_{j,0}(0). \label{fixed-conf}
 \end{equation}
 It follows from \eqref{fixed-conf} that the leading order relative positions of agents do not change with time. Hence by the shift invariance assumption on $\phi_{ij}$, it follows that $\phi_{ij}(x_0(t))$ is independent of $t$. We denote by $\overline{a}_{ij}$:
\[
\overline{a}_{ij} = \phi_{ij}(x_0) \quad \forall i,j.
\]
Since $\overline{v}_{i,0}=v_{i,0}$, it follows that $\alpha_{i,0}=\xi_i(\overline{v}_{i,0}-v_{i,0}) = \xi_i(0) >0$.
Hence, for each $i$,
\begin{equation}
 \dot{v}^f= \xi_i(0)\, \left(\sum_{j=1}^N \overline{a}_{ij} \, v_{j,1} - v_{i,1}\right)+\beta_{i,0}. \label{accelarationflock1}
\end{equation} 
Taking the $k$th component in equation \eqref{accelarationflock1} we have that 
 \begin{equation} \label{flock-equation}
 \dot{v}^{f,k}= \xi_i(0) \, \left(\sum_{j=1}^N\overline{a}_{ij}  v_{j,1}^k - v_{i,1}^k\right)+\beta_{i,0}^k,
 \end{equation}
for $k=1,\dots,d$. 
We define for $1 \leq i,j \leq N$
\[
\begin{aligned}
 q_{ij}&= \xi_i(0) \, \overline{a}_{ij} \quad \forall i\neq j,\\ 
 q_{ii} &= \xi_i(0) \, \overline{a}_{ii}- \xi_i(0).
 \end{aligned}
\]
 The matrix $Q = [q_{ij}]$ is a transition rate matrix of a continuous time Markov chain. 
Moreover, since $q_{ij} = \xi(0) \, \overline{a}_{ij}>0$  for all $i \neq j$, the matrix $Q$ corresponds to an ergodic Markov chain in continuous time. Thus there exists a unique vector $(\pi_i)_{i=1}^N $ such that
$\sum_{i=1}^N \pi_i = 1$ and 
\[
\sum_{i=1}^N \pi_i\,q_{ij} = 0.
\]
With the introduction of matrix $Q$,  \eqref{flock-equation} may be written as
  \[
  	\dot{v}^{f,k}=\sum_{j=1}^N q_{ij} v_{j,1}^k +\beta_{i,0}^k.
  \]
  Multiplying by $\pi_i$ and summing over $ i= 1\dots N$, and using properties of $q_{ij}$ and $\pi_i$ we obtain that
\begin{equation}
   \dot{v}^{f,k} = \sum_{i=1}^N \pi_i \beta_{i,0}^k.
\end{equation}
Hence the flocking velocity $v^f(t)$ evolves according to the equation
\begin{equation} \label{eq:v_infty_beta}
\dot{v}^f =  \sum_{i=1}^N \pi_i \beta_{i,0}. 
\end{equation}

In general, one may expect the steering terms $\beta_i$ to depend on $x_i,v_i$ and possible $t$, so that 
\begin{equation}\label{eq-beta-feedback}
\beta_i(t) = \eta_i(x_i(t),v_i(t),t)    
\end{equation}
where we suppose $\eta_i:\real^d \times \real^d \times [0,\infty) \to \real^d$
is $C^1$ in its arguments. Then, it follows that the evolution equation for $v^f$ is given by 
\begin{equation}
\dot{v}^f(t) = \sum_{j=1}^N \pi_i(x_0(t)) \eta_i(x_{0,i}(t),v^f(t),t),
\end{equation}
where $x_{i,0}(t)$ are given by
\begin{equation}
x_0(t) = x(0) + \int_0^t v^f(s) \, ds.    
\end{equation}
Here $x(0)=(x_1(0),\dots,x_N(0))$ is the initial position of the agents and we observe that $\pi_i(x_0(t))$ is constant in time, since $\phi_{ij}(x_0(t))$ is constant in time.
We may summarize the leading order time evolution by the system of ODEs
\begin{equation}\label{eq-fastslow-reduced}
\begin{aligned}
\dot{x}_0(t) &= v^f(t),\\
\dot{v}^f(t) &= \sum_{j=1}^N \pi_i(x_0(t)) \eta_i(x_{0,i}(t),v^f(t),t).
\end{aligned}    
\end{equation}
This is a $(N+1)d$ dimensional system and 
the leading order velocities are given by $v_{i,0}(t)=v^f(t)$. We observe that in order to obtain a unique solution, we need an initial condition for $v^f(0)$ which may not be the true initial velocities $v_i(0)$ of the agents. Intuitively, one expects a rapid initial transient layer during which flocking occurs and the agents reach the flocking velocity $v^f(0)$. 

In the next subsection, we scale time to investigate this transient layer.
 
 \subsection{Initial transient layer} \label{subsec:transient-layer}
  The given problem has initial condition, $x(0) = (x_1(0)\dots,x_N(0))$ and $v(0) = (v_1(0),\dots,v_N(0))$. 
  We zoom into the transient layer at $t =0$ by introducing the variable $\tau = t/\epsilon$. We define $X$ and $V$ by 
  \[
  X(\tau,\epsilon) = x(t,\epsilon)= x(\epsilon \tau,\epsilon)  \quad\text{and} \quad
    V(\tau,\epsilon) = v(t ,\epsilon)= v(\epsilon \tau,\epsilon) .
  \]
  Differentiating with respect to $\tau$, we have that
 \[
   \frac{1}{\epsilon} \frac{dX_i(\tau,\epsilon)}{d \tau} =\frac{dx_i(t,\epsilon)}{dt} 
 \]
  and 
 \[
   \frac{1}{\epsilon} \frac{dV_i(\tau,\epsilon)}{d \tau} = \frac{dv_i(t,\epsilon)}{dt}. 
 \]
 With the change of variable we have the following system of differential equations: 
 \begin{equation}\label{eq-transient}
   \begin{aligned}
   X_i' &= \epsilon V_i, \\
   V_i' &= \alpha_i(\overline{V}_i - V_i) + \epsilon \beta_i,
   \end{aligned}
 \end{equation}
 where the prime denotes differentiation with respect to $\tau$. 
 The initial conditions to impose are 
  \begin{equation}
   \begin{aligned}
   X_i(0)&= x_i(0),\\
   V_i(0) &= v_i(0).
   \end{aligned}
 \end{equation}
As before, we assume an $\epsilon$-expansion for $X_i$ and $V_i$ of the following form:
 \begin{equation}
   \begin{aligned}
   X_i(\tau,\epsilon) &= X_{i,0}(\tau,\epsilon) + \epsilon X_{i,1}(\tau, \epsilon) +\dots,\\
   V_i(\tau,\epsilon) &= V_{i,0}(\tau,\epsilon) + \epsilon V_{i,1}(\tau, \epsilon) +\dots. 
   \end{aligned}
 \end{equation}
Substituting this expansion in \eqref{eq-transient} we obtain
 \[
   \begin{aligned}
   X'_{i,0} + \epsilon X'_{i,1}+\dots &= \epsilon( V_{i,0} +\epsilon V_{i,1} + \dots),\\
   V'_{i,0} + \epsilon V'_{i,1}+\dots &= (\alpha_{i,0} + \epsilon\alpha_{i,1}+\dots)( (\overline{V}_{i,0}-V_{i,0}) +\epsilon (\overline{V}_{i,1} -V_{i,1}) \dots),\\
   &+ \epsilon(\beta_{i,0} + \epsilon \beta_{i,1} +\dots).
   \end{aligned}
 \]
 Balancing the $\epsilon^0$ terms, we find that 
 \begin{equation} \label{boundary-layer-eps0}
 \begin{aligned}
  X'_{i,0} &= 0,\\
  V'_{i,0} &= \alpha_{i,0}(\overline{V}_{i,0}-V_{i,0}).
 \end{aligned}
 \end{equation}
It follows that $X_{i,0}(\tau) = X_i(0) = x_i(0)$.
This means that during the initial transient the leading order positions do not change in time $\tau$.

The model \eqref{boundary-layer-eps0} is similar to \eqref{eq-flock-steer-closed} without the steering terms, except that the positions $X_{i,0}$ are constant. Hence the influence matrix $a_{ij} = \phi_{ij}(X_0)$ is constant and strictly positive. 
Defining
\[
  d_X(\tau) = \max_{i,j}\|X_{i,0}(\tau)- X_{j,0}(\tau)\|, \;\; d_V(\tau) = \max_{i,j}\|V_{i,0}(\tau)-V_{j,0}(\tau)\|,
\]
to be the diameters in the position and the velocity spaces respectively, we see that the assumptions of Lemma \eqref{lem-alpha-bar} and Theorem \eqref{thm-closed-flock} are satisfied since the diameter in the steering space is zero. Thus Theorem \eqref{thm-closed-flock} can be invoked to conclude that $d_V(\tau) \to 0$ as $\tau \to \infty$.

Now let us find $\lim_{\tau \to \infty} V_{i,0}(\tau)$. The second equation of \eqref{boundary-layer-eps0} is 
\[
 \begin{aligned}
  V'_{i,0} &= \alpha_{i,0}(\overline{V}_{i,0}-V_{i,0})
           =   \alpha_{i,0}\left(\sum_{j=1}^N \phi_{ij}(X_0)V_{j,0}-V_{i,0}\right) 
           = \sum_{j=1}^N q_{ij}V_{j,0}.
  \end{aligned}         
\]
Where $Q = (q_{ij})$ is the same matrix that we have used in \eqref{flock-equation}. Taking the $k$th components and letting $Z_i^k = V_{i,0}^k$ and $Z^k = (Z^k_1,\dots,Z^k_N)$ we have $Z^{'k} = Q \, Z^k$. That is
\[
  Z^{'
  k}_i = \sum_{j=1}^N q_{ij} Z^k_j.
\]
Multiplying by $\pi_i$ and sum it from 1 to $N$, we have
\[
  \sum_{i=1}^N \pi_i Z^{'k}_i = \sum_{i=1}^N\sum_{j=1}^N \pi_i q_{ij} Z^k_j = \sum_{j=1}^N\left(\sum_{i=1}^N \pi_i q_{ij}\right)Z^k_j = 0.
\]
This implies that for $t \geq 0$,
\begin{equation} \label{eq-pi-boundary}
\sum_{i=1}^N \pi_i Z^k_i(t) = \sum_{i=1}^N \pi_i Z^k_i(0).
\end{equation}
 However, all the eigenvalues of $Q$ except for one zero eigenvalue have negative real parts. Thus $Z^k(t) \to \overline{Z}^k$ where $\overline{Z}^k$ is a multiple of $(1,\dots,1)^t$. That is $\overline{Z}_k = c_k \, (1,\dots,1)^t$.
  To find $c_k$, we take limits in \eqref{eq-pi-boundary}:
  \[
   \lim_{\tau\to\infty}\sum_{i=1}^N \pi_i Z^k_i(t) = c_k = \sum_{i=1}^N \pi_i Z^k_i(0) = \sum_{i=1}^N \pi_i V_{i,0}^k(0).
  \]
Using the matching condition $v^f(0)= \lim_{\tau \to \infty} V_{i,0}(\tau)$, we
deduce that 
  \begin{equation} \label{eq:initial-flocking-velocity}
   v^f(0)=  \lim_{\tau \to \infty} V_{i,0}(\tau) = (c_1,\dots,c_d) = \left(\sum_{i=1}^N \pi_i V_{i,0}^1(0),\dots,\sum_{i=1}^N \pi_i V_{i,0}^d(0) \right).
  \end{equation}
  
\section{Numerical Examples} \label{Numerical_examples}
In this section, we present some numerical simulations to illustrate our theoretical analysis. We consider the collection of $N = 7$ agents in two dimensions. 
We shall choose the initial positions and initial velocities randomly
(i.i.d.\ uniformly distributed) inside square regions 
$[0,8] \times [0,8]$ in position and $[0,3] \times [0,3]$ in velocity spaces respectively.

We assume all agents wish to follow the same circular trajectory 
\[
y(t) = (100+10\,\sin(0.1t), \;\; 10+10 \cos\,(0.1t))^t
\]
in the position space. 
We assume each agent $i$ implements a feedback law for steering according to 
\[
\beta_i(t) = \gamma_1(\dot{y}(t) - v_i(t)) + \gamma_2(y(t) - x_i(t)),
\]
Where $\gamma_1$
and $\gamma_2$ are two parameters. We remark that our circular target
trajectory is not intended to capture the mill ring phenomenon. In fact, we
explore a situation where the agents flock and closely follow the circular
target trajectory. Since they flock, they cannot possibly be spread out
in position along the circle as this would imply different velocities. 

In our MATLAB simulations, we took
$\gamma_1 = 2$ and $\gamma_2 = 0.1$.  We computed the solutions of the full
model \eqref{eq-flock-steer-closed} for $\epsilon = 0.1$ , $\epsilon = 0.01$
and $\epsilon = 0.001$. 

Additionally, we also computed the solution of the reduced model 
\eqref{eq-fastslow-reduced} obtained via the singular perturbation theory. In order to compute the correct 
initial flocking velocity $v^f(0)$ to be used in conjunction with \eqref{eq-fastslow-reduced}, we use the equation \eqref{eq:initial-flocking-velocity}. 

Finally, we computed the leading order approximation and compared it to the simulation when $\epsilon = 0.1$, $\epsilon = 0.01$ and  $\epsilon = 0.001$.  
For all $\epsilon$ we used the same randomly chosen initial conditions which
we provide here. Initial positions were 
\[
\begin{aligned}
   x_1(0) &= (6.8897,7.1568)^t, \;\;  x_2(0) = (1.6819,4.4079)^t, \;\; x_3(0) = (4.0103,5.8168)^t,\\
   x_4(0) &= (6.3834,6.7922)^t, \;\; x_5(0) = (2.4842,6.1173)^t, \;\; x_6(0) = (5.8959,1.4635)^t,\\ x_7(0) &= (1.0710,4.4853)^t,
\end{aligned}
\] 
and the initial velocities were 
\[
\begin{aligned}
  v_1(0) &= (2.8792,1.0212)^t,\;\; v_2(0) = (1.7558,0.6714)^t,\;\; v_3(0) = (2.2538,0.7653)^t,\\ 
  v_4(0) &= (1.5179,2.0972)^t,\;\; v_5(0) = (2.6727,2.8779)^t,\;\; v_6(0) =(1.6416,0.4159)^t , \\ v_7(0) &= (0.4479,07725)^t.
\end{aligned}
\]
In these simulations, we have used the following functions: 
\[
 \begin{aligned}
  \phi_{ij}(x,u) &= \frac{\phi(r_{ij})}{\sum_k \phi(r_{ik})} \, \,\text{where,}\, \, r_{ij} = \|x_j - x_i\|,\\
  \phi(r) &= \frac{1}{(1 + r^2)^{0.3}},\\
  \alpha_i(t) &= \xi_i(u_i) = \frac{10}{(0.1 + \|u_i\|^2)^{0.5}}\, \,\text{where,}\, \, u_i = \overline{v}_i - v_i.\\
 \end{aligned}
\]

\begin{figure}[tbhp]
\centering 
\subfloat[Positions]{\label{fig:sub1-0.1}\includegraphics[width=0.47\linewidth]{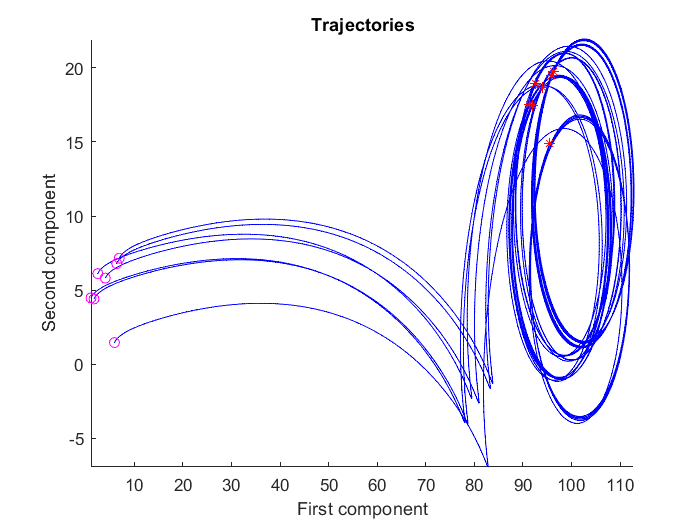}} 
\subfloat[Velocities]{\label{fig:sub2-0.1}\includegraphics[width=0.47\linewidth]{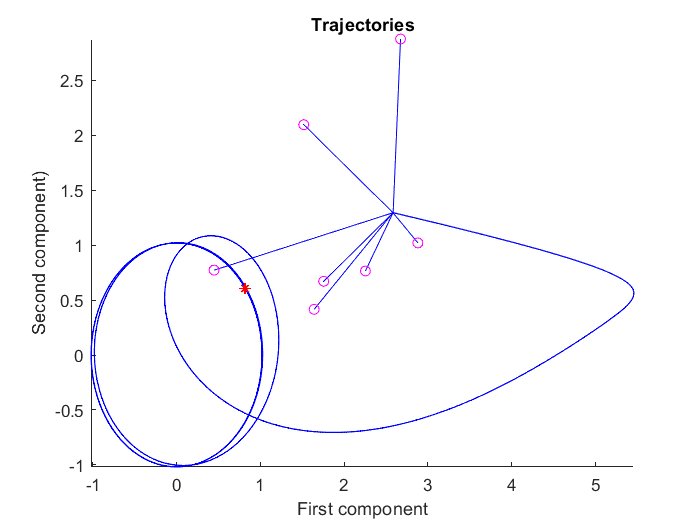}} 
\caption{Trajectories in position and velocity spaces. Magenta circles represent initial values and red stars the final values. Case $\epsilon = 0.1$.} 
\label{fig:trajectories-velocities-spaces-0.1} 
\end{figure}
\begin{figure}[tbhp]
\centering 
\subfloat[$x_i^1(t)$ and $y^1(t)$]{\label{fig:sub-x-component-0.1}\includegraphics[width=0.47\linewidth]{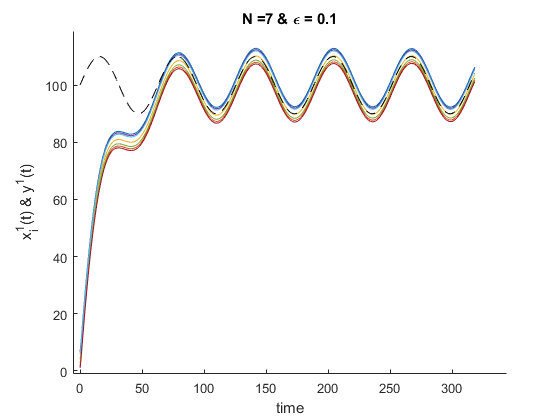}} 
\subfloat[$x_i^2(t)$ and $y^2(t)$]{\label{fig:sub-y-component-0.1}\includegraphics[width=0.47\linewidth]{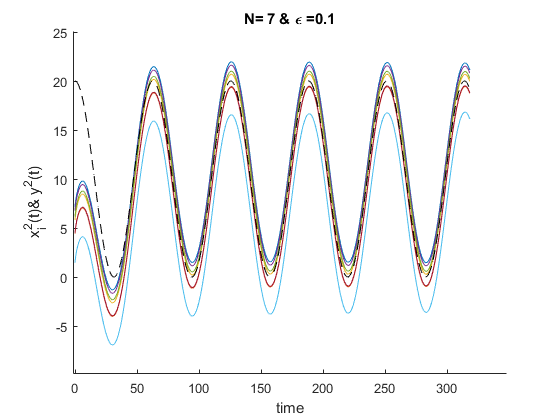}} 
\caption{Positions against time. Case $\epsilon = 0.1$. Target trajectory in dash black.} 
\label{fig:trajectories-components-0.1} 
\end{figure}
\begin{figure}[tbhp]
\centering 
\subfloat[$v_i^1(t)$ and $\dot{y}^1(t)$ ]{\label{fig:sub-v1-component-0.1}\includegraphics[width=0.47\linewidth]{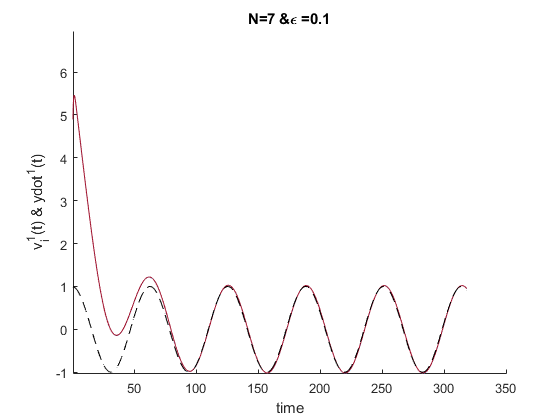}} 
\subfloat[$v_i^2(t)$ and $\dot{y}^2(t)$]{\label{fig:sub-v2-component-0.1}\includegraphics[width=0.47\linewidth]{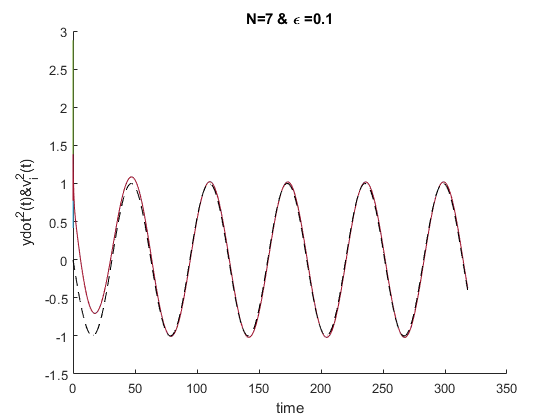}} 
\caption{Velocities against time. Case $\epsilon = 0.1$.Target velocity in dash black} 
\label{fig:velocities-components-0.1} 
\end{figure}

\begin{figure}[tbhp]
\centering 
\subfloat[$v_i^1(t)$ and $\dot{y}^1(t)$]{\label{fig:sub-x-short-0.1}\includegraphics[width=0.47\linewidth]{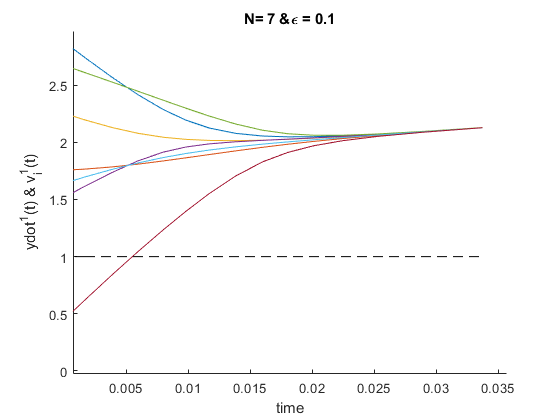}} 
\subfloat[$v_i^2(t)$ and $\dot{y}^1(t)$]{\label{fig:sub-y-short-0.1}\includegraphics[width=0.47\linewidth]{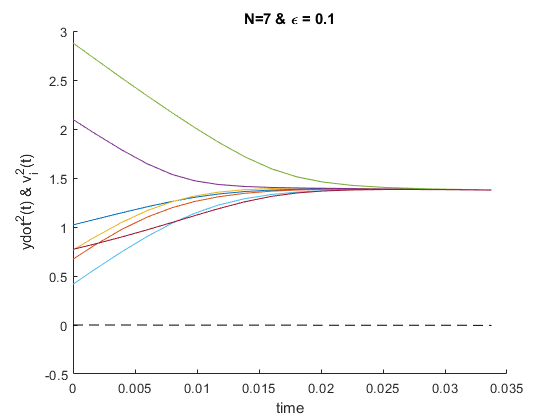}} 
\caption{Velocities against time for $t$ close to zero. Case $\epsilon = 0.1$ short representation. Target velocity in dash black}
\label{fig:boundary-layer-plotting-eps=0.1} 
\end{figure}
\begin{figure}[tbhp]
\centering 
\subfloat[Positions]{\label{fig:target-error-sub1-0.01}\includegraphics[width=0.47\linewidth]{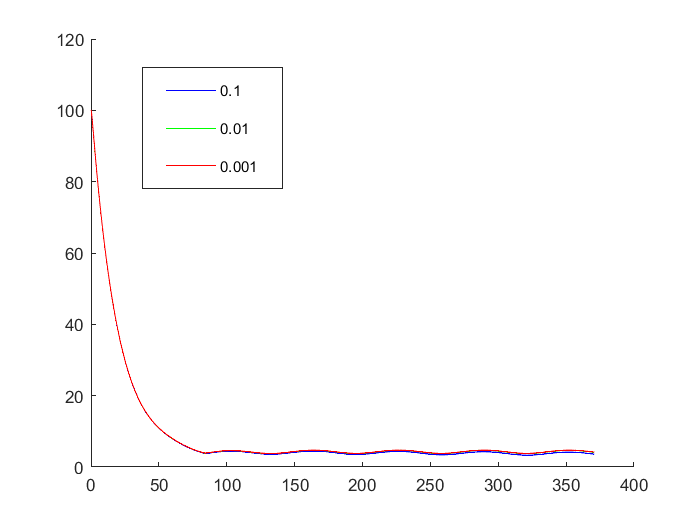}} 
\subfloat[Velocities]{\label{fig:target-error-sub2-0.01}\includegraphics[width=0.47\linewidth]{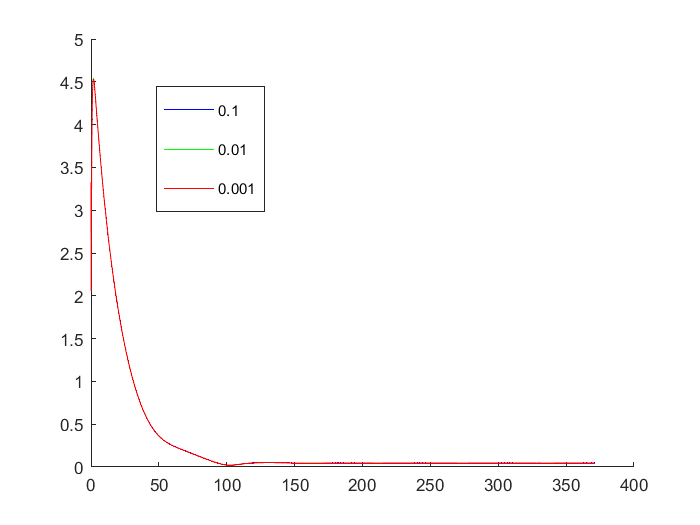}} 
\caption{Maximum target errors $\max_i\|x_i(t)-y(t)\|$ (position) and
  $\max_i\|v_i(t)-\dot{y}(t)\|$ (velocity) against $t$ for $\epsilon=0.1,0.01,0.001$. } 
\label{fig:target-error} 
\end{figure}
\newpage

 Figure \ref{fig:trajectories-velocities-spaces-0.1}
 shows the trajectories of all agents in the position and velocity spaces
 for the case of $\epsilon=0.1$.
 The small magenta circles represent
 initial values and the red stars represent the final time
 values. 
 Figure \ref{fig:sub2-0.1}
 shows that all agents flock to a common velocity and then stay
 together and steer towards the target velocity.
 Figure \ref{fig:trajectories-components-0.1}
 shows the plots of the components of trajectories in the position space against time. We see that
 after some time, all the components closely follow the components of the
 target trajectory (in dashed black). Similarly Figure
 \ref{fig:velocities-components-0.1} shows the components of the trajectories
 in the velocity space against time and both velocity alignment among agents
 and close tracking of the target are observed.
 The behavior of the system is similar for the other values of $\epsilon=0.01$
 and $\epsilon=0.001$ and are not shown. 
 Figure \ref{fig:boundary-layer-plotting-eps=0.1} clearly shows that
 the velocities of agents converge very fast to a common velocity  
and this common velocity is seen to evolve in Figure
\ref{fig:velocities-components-0.1}. Figure \ref{fig:target-error} shows
the maximum errors (measured in Euclidean norm) $\max_i \|x_i(t)-y(t)\|$ and $\max_i \|v_i(t)-\dot{y}(t)\|$
of positions and velocities with respect to the target, for the values of
$\epsilon=0.1, 0.01$ and $0.001$. The target tracking errors are
similar for the different $\epsilon$ values explored. We comment that the
feedback law we chose does not theoretically guarantee zero asymptotic
tracking error, but 
it is expected to track closely as observed in the figure.

\paragraph{Comparison of the leading order and the cases $\epsilon =0.1$, $\epsilon =0.01$ and $\epsilon =0.001$}\label{par:comparison_leading_order}

Figure \ref{fig:leading-order-errors} shows the numerically observed error
between the leading order approximation \eqref{eq-fastslow-reduced} and the
full model \eqref{eq-closed-loop-epsilon} for the values of $\epsilon=0.1,
0.01$ and $0.001$.
The errors shown are the maximum Euclidean norm errors
$\max_i \|x_i(t)-x_{i,0}(t)\|$ and $\max_i\|v_i(t)-v_{i,0}(t)\|$ of the positions
and the velocities as a function of time $t$. It is clear that as $\epsilon$ decreases,
the errors in the positions decrease as well. As for the velocities, the errors 
are quite small and decrease substantially from $\epsilon=0.1$ to $0.01$, but
do not show a big change from $\epsilon=0.01$ to $0.001$. We believe that
this is due to numerical errors in integrating the stiff system of ODEs.

\begin{figure}[tbhp]
\centering 
\subfloat[Positions]{\label{fig:leading-error-position}\includegraphics[width=0.47\linewidth]{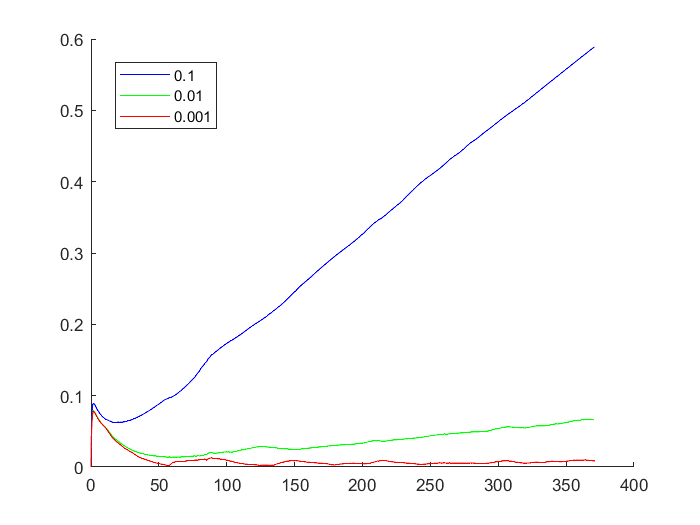}} 
\subfloat[Velocities]{\label{fig:leading-error-velocity}\includegraphics[width=0.47\linewidth]{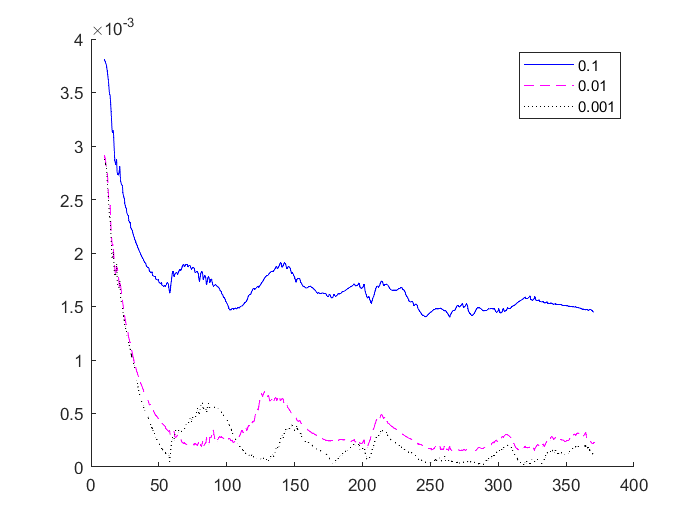}} 
\caption{Maximum errors $\max_i\|x_i(t)-x_{i,0}(t)\|$ and
  $\max_i\|v_i(t)-v_{i,0}(t)\|$ in the position and velocity spaces between
  the full model and the leading order approximation for $\epsilon=0.1, 0.01$
  and $0.001$. The error in velocity is shown starting at time $t=10$ as the
error is expected to be large during the initial transient. The errors in
position must be compared with the radius $10$ of the circle described in
position space and the errors in velocity must be compared with the radius $1$
of the circle described in the velocity space.} 
\label{fig:leading-order-errors} 
\end{figure}
 
\newpage
\section{Concluding remarks} 
We introduced and analyzed a generalized model of flocking with steering. In
our model, the acceleration of each agent has flocking and steering
components. The flocking component is a generalization of many existing models
and takes into account real world factors such as apriori bound on
acceleration, masking effects and orientation bias.   
We proved that the generalized model with steering flocks under certain sufficient conditions which naturally include assumptions on the steering components $\beta_i(t)$ of the accelerations of the agents. 
We also studied the case where flocking is much faster than steering using formal singular perturbation theory and showed that the leading order behavior is one where the agents flock  together with velocity $v^f(t)$ which evolves in time, see \ref{eq:v_infty_beta}. 
Our simulations showed that the leading order approximation was very similar to the real solution for small values of $\epsilon$ a scale parameter indicating the magnitude difference between flocking and steering accelerations. While this supports our formal derivation via singular perturbation theory, in future we would like to derive rigorous results that support the formal theory.

We also observe that the influence functions $\phi_{ij}$ were assumed to be nonvanishing for all $i,j \in {1,\dots, N}$ in our flocking results. This implies that the communication graph formed by the agents is strongly connected. In the case of the robotic systems, this will be computationally expensive. Even in the case of biological agents, all to all communication among agents may not be a reasonable assumption. This raises the question whether one could relax the strict positivity condition and still obtain flocking results. 

Our flocking results assumed that the steering components $\beta_i(t)$ of the agents were asymptotically in agreement ($\beta_i(t)-\beta_j(t) \to 0$ as $t \to \infty$). A related natural question is if the agents form subgroups within which this condition holds but fails across these subgroups, then can we obtain clusters of agents such that agents within each cluster flock together.  

We showed that our model without the steering terms cannot exhibit the mill ring phenomenon
frequently observed in nature. The concept of flocking as defined by
researchers in the field involves zero (or asymptotically zero) velocity
diameter, and as such, it precludes the most interesting mill ring phenomenon
where the agents are spread out along the circle. While our model with
steering can exhibit such interesting mill ring phenomenon for appropriately chosen
steering functions, we did not explore this in this paper and is the subject
of future research. 

\appendix
\section{Useful lemmas} 
\begin{lemma}\label{lem-loclip-abscont}
Let $F : \real^n \to \real$ be locally Lipschitz and $u : [0,T] \to \real$ be absolutely continuous.
Then $F \circ u : [0,T] \to \real$ 
is absolutely continuous. 
\end{lemma}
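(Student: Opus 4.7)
The plan is to reduce the absolute continuity of $F\circ u$ to the absolute continuity of $u$ by exploiting a Lipschitz constant for $F$ that is valid on a compact set containing the image of $u$.

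First, I would observe that since $u$ is absolutely continuous on $[0,T]$, it is in particular continuous, so $K = u([0,T])$ is a compact subset of $\real^n$. (Here I am reading $u:[0,T]\to \real^n$, which is what the composition requires.)

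Second, I would upgrade ``locally Lipschitz'' to ``Lipschitz on $K$.'' The cleanest route is a compactness/contradiction argument: suppose $F$ is not Lipschitz on $K$; then there are sequences $x_m, y_m \in K$ with $|F(x_m)-F(y_m)| > m\|x_m-y_m\|$. Passing to convergent subsequences $x_m \to x^*$, $y_m\to y^*$ in $K$, one gets a contradiction in both cases $x^*\ne y^*$ (by continuity of $F$ the left side is bounded) and $x^*=y^*$ (both sequences eventually lie in a single ball where $F$ is Lipschitz with a fixed constant). Thus there exists $L>0$ with $|F(p)-F(q)|\leq L\|p-q\|$ for all $p,q\in K$.

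Third, I would use the $\varepsilon$-$\delta$ definition of absolute continuity. Given $\varepsilon>0$, choose $\delta>0$ corresponding to $\varepsilon/L$ for $u$: for every finite collection of pairwise disjoint subintervals $(a_k,b_k)\subset[0,T]$ with $\sum_k(b_k-a_k)<\delta$, we have $\sum_k \|u(b_k)-u(a_k)\|<\varepsilon/L$. Then the Lipschitz bound gives
\[
\sum_k |F(u(b_k))-F(u(a_k))| \;\leq\; L\sum_k \|u(b_k)-u(a_k)\| \;<\; \varepsilon,
\]
which is exactly absolute continuity of $F\circ u$ on $[0,T]$.

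No step here is a serious obstacle; the only conceptually nontrivial point is the passage from ``locally Lipschitz'' to ``Lipschitz on $K$,'' which is a standard compactness argument and could alternatively be done by covering $K$ with a finite number of balls $B(x_j,r_j)$ on which $F$ is Lipschitz, using a Lebesgue number for the cover to control the Lipschitz constant on pairs of close points, and using boundedness of $F$ on $K$ to control the Lipschitz ratio on pairs of far-apart points. The rest is a direct matching of quantifiers in the definition of absolute continuity.
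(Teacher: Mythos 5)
The paper states this lemma in the appendix without any proof, so there is no argument of the authors' to compare yours against. Your proposal is correct and is the standard argument: the only point requiring care is upgrading locally Lipschitz to Lipschitz on the compact image $K=u([0,T])$, and your compactness/contradiction step (splitting into the cases $x^*\ne y^*$ and $x^*=y^*$) handles it properly; the final $\varepsilon$--$\delta$ computation then goes through exactly as you wrote it. You are also right to read the hypothesis as $u:[0,T]\to\real^n$ — as stated in the paper the composition $F\circ u$ would not typecheck, and the paper's own applications (e.g.\ to $t\mapsto \tfrac12\|v_j(t)\|^2$ and to $t\mapsto\|x_i(t)-x_j(t)\|$) confirm the vector-valued reading.
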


\begin{lemma}\label{lem-max-deriv}
The function $f_i: \real \to \real$ be absolutely continuous on $[0,T]$ for $i=1,\dots,n$ and let $f:\real \to \real$ be defined by 
\[
f(t) = max\{ f_i(t) \, | \,  i = 1\dots n \}.
\]
Suppose $i_*:\real \to \{1,\dots,n\}$ satisfies $f_{i_*(t)}(t)  \geq f_j(t)$ for all $t$ and $j=1,\dots,n$.
Then $f$ is absolutely continuous and $f'(t) = f'_{i_*(t)}(t)$ for almost all $t$. 
\end{lemma}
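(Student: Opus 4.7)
The plan is to handle the two assertions separately: first absolute continuity of $f$, and then the a.e.\ identification $f'(t) = f'_{i_*(t)}(t)$ via a global-minimum argument.

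For absolute continuity, I would induct on $n$ using the identity $\max(a,b) = \tfrac{1}{2}\bigl((a+b) + |a-b|\bigr)$. Since the class of absolutely continuous functions on $[0,T]$ is closed under addition and scalar multiplication, and since post-composition with any globally Lipschitz function (in particular $|\cdot|$) preserves absolute continuity, $\max(f_1,f_2)$ is absolutely continuous whenever $f_1$ and $f_2$ are. The general case follows by writing $f = \max(f_1,\max(f_2,\ldots,f_n))$ and iterating. As a consequence, $f$ is differentiable almost everywhere, as are each $f_j$.

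For the derivative formula, the key observation is that at any point where some $f_j$ attains the maximum, the function $g_j := f - f_j$ achieves its global minimum value $0$, which pins down its derivative. Let $N \subset [0,T]$ be the (null) set of points where at least one of $f,f_1,\ldots,f_n$ fails to be differentiable. Fix any $t \notin N$. By the hypothesis on $i_*$, $f_{i_*(t)}(t) \geq f_k(t)$ for every $k$, hence $f_{i_*(t)}(t) = f(t)$. Setting $j = i_*(t)$, the function $g_j$ satisfies $g_j \geq 0$ on $[0,T]$ (since $f \geq f_j$ pointwise) and $g_j(t) = 0$, so $t$ is a global minimum of $g_j$. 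Since $g_j = f - f_j$ is differentiable at $t$, both one-sided difference quotients of $g_j$ at $t$ must have opposite signs, forcing $g_j'(t) = 0$. Therefore $f'(t) = f'_j(t) = f'_{i_*(t)}(t)$, which is the desired identity at every $t \notin N$.

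I do not expect any serious obstacle. The one subtlety worth flagging is that the selector $i_*$ is not assumed measurable and may jump arbitrarily between equally maximal indices; however, the argument above is purely pointwise, using only that $i_*(t)$ is \emph{some} maximizer, so no measurability issue ever enters. Likewise, one need not invoke Lebesgue density points or the chain-rule subtleties that would arise if the right-hand side $f'_{i_*(t)}(t)$ had to be interpreted as a composition; it is simply the numerical value of $f'_j$ at $t$ for the single index $j = i_*(t)$.
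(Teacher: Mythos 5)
Your proof is correct. Note that the paper states this lemma in its appendix without any proof at all, so there is no argument of the authors' to compare against; your write-up actually supplies the missing justification. Both halves of your argument are sound and standard. For absolute continuity, the identity $\max(a,b)=\tfrac12\bigl((a+b)+|a-b|\bigr)$ together with closure of the absolutely continuous functions under sums and under post-composition with Lipschitz maps (this is exactly the paper's Lemma \ref{lem-loclip-abscont} with $F=|\cdot|$) handles $n=2$, and iteration gives general $n$. For the derivative identity, the Fermat-type observation that $g_j=f-f_{i_*(t)}$ is nonnegative, vanishes at $t$, and is differentiable at $t$ does force $g_j'(t)=0$; your remark that the argument is purely pointwise in $j=i_*(t)$, so that no measurability of the selector $i_*$ is needed, is exactly the right thing to flag. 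The only cosmetic caveat is that the ``two one-sided difference quotients of opposite sign'' step requires $t$ to be an interior point of $[0,T]$; since $\{0,T\}$ is a null set this does not affect the ``for almost all $t$'' conclusion, but it is worth saying explicitly.
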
 

\begin{lemma}\label{lem-max-interval}
The forward maximal interval of existence of the model \ref{eq-flock-steer-open} is $[0, \infty)$ where we assume that $\alpha_i,\beta_i,a_{ij}$ are all continuous functions on $[0,\infty)$. 
\end{lemma}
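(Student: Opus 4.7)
The plan is to recognize that the open-loop system \eqref{eq-flock-steer-open} becomes, once $\alpha_i(t)$, $a_{ij}(t)$, and $\beta_i(t)$ are treated as given continuous functions of $t$, a linear nonautonomous ODE on $\real^{2Nd}$ in the state $z(t) = (x_1,\dots,x_N,v_1,\dots,v_N)$ of the form $\dot z = A(t)\,z + b(t)$, where the block matrix $A(t)$ encodes the alignment coefficients $\alpha_i(t)$ and $a_{ij}(t)$, and $b(t)$ encodes the forcing $\beta_i(t)$. Standard Picard--Lindel\"of yields a unique local solution on some maximal forward interval $[0,T_{\max})$; to conclude $T_{\max}=\infty$ it suffices to derive an a priori bound on $\|z(t)\|$ on every bounded subinterval of $[0,T_{\max})$, since the only way the maximal interval of a linear ODE with continuous coefficients can fail to be $[0,\infty)$ is via finite-time blow-up, which cannot happen once an a priori bound is established.

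First I would bound the velocities via an energy estimate patterned on the proof of the friction lemma. Set $E(t) = \max_i \|v_i(t)\|^2$; by Lemma \ref{lem-loclip-abscont} this function is absolutely continuous, and by Lemma \ref{lem-max-deriv} there is an index $i = i(t)$ achieving the maximum with $\frac{d}{dt}E(t) = 2\langle v_i,\dot v_i\rangle$ for almost every $t$. Substituting the $\dot v_i$ equation and using that $\bar v_i$ is a convex combination of the $v_j$, so $\|\bar v_i\|\leq \max_j\|v_j\| = \|v_i\|$, together with Cauchy--Schwarz, gives
\[
\frac{d}{dt}E(t) \leq 2\alpha_i(t)\|v_i\|\|\bar v_i\| - 2\alpha_i(t)\|v_i\|^2 + 2\|v_i\|\|\beta_i(t)\| \leq 2\sqrt{E(t)}\,B(t),
\]
where $B(t) = \max_j\|\beta_j(t)\|$ is continuous on $[0,\infty)$. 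On the set where $E>0$ this becomes $\frac{d}{dt}\sqrt{E(t)} \leq B(t)$ a.e., whence
\[
\sqrt{E(t)} \leq \sqrt{E(0)} + \int_0^t B(s)\,ds,
\]
a bound that is finite on every bounded subinterval of $[0,T_{\max})$.

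Once the velocities are bounded on $[0,T]$ for every $T<T_{\max}$, the positions inherit a bound through $x_i(t) = x_i(0) + \int_0^t v_i(s)\,ds$, so $\|z(t)\|$ stays bounded as $t$ approaches any finite value. This contradicts $T_{\max}<\infty$ and forces $T_{\max}=\infty$.

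The step most in need of care is the energy estimate, specifically handling the potential nondifferentiability of $E$ at times where the maximizing index jumps (dealt with by the appendix lemmas) and the degenerate case $E(t)=0$, where the passage from the estimate on $dE/dt$ to the one on $d\sqrt{E}/dt$ is illegitimate; but $E(t)=0$ means all $v_i$ vanish, in which case the bound $\sqrt{E(t)}\leq \int_0^t B(s)\,ds$ holds trivially from that point on. All remaining manipulations are routine, and the fundamental reason the argument works is the linearity of the open-loop equations in $(x,v)$ once $\alpha_i,a_{ij},\beta_i$ are viewed as exogenous inputs.
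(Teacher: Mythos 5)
Your proof is correct and follows essentially the same route as the paper's: an energy estimate on $E(t)=\max_i\|v_i(t)\|^2$ via Lemmas \ref{lem-loclip-abscont} and \ref{lem-max-deriv}, Cauchy--Schwarz together with the convex-combination bound $\|\overline{v}_i\|\leq\|v_i\|$ to kill the alignment terms, integration of the resulting bound on $\sqrt{E}$, and then a position bound ruling out finite-time blow-up. Your treatment is in fact slightly cleaner than the paper's (you handle the degenerate case $E=0$ explicitly and avoid a spurious $\alpha_i$ factor on the $\beta_i$ term that appears in the paper's computation).
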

\begin{proof}
   Let us suppose that the forward maximal interval of existence is the interval $[0,T^*)$, with $T^* < \infty$. We define the energy of the system by $E(t) = \max_i E_i(t) =  \max_i \frac{1}{2}\|v_i(t)\|^2.$
   Then
   \[
   \begin{aligned}
      \frac{dE(t)}{dt} &= \langle v_i,\dot{v}_i \rangle = \langle v_i , \alpha_i (\overline{v}_i-v_i) + \beta_i\rangle \\
      & = \alpha_i\langle v_i, \overline{v}_i\rangle - \alpha_i \langle v_i,v_i \rangle  + \alpha_i \langle v_i,\beta_i \rangle \\
      &\leq  \alpha_i\|v_i\|\left(\sum_j\, a_{ij}\|v_j\|\right)  - \alpha_i\|v_i\|^2 + \alpha_i\|v_i\|\|\beta_i\| \\
      &\leq \alpha_i\|v_i\|\|\beta_i\| 
  \end{aligned}
  \] 
   We have used the Cauchy-Schwartz inequality  and the conditions  $\|v_j\| \leq \|v_i\| $ and $\sum_j a_{ij} = 1$. We rearrange this inequality to
   \[
      \frac{dE(t)}{dt}  \leq \alpha_i (2E(t))^{\frac{1}{2}}\|\beta_i\| \leq \overline{\alpha}(2E(t))^{\frac{1}{2}}\|\beta_i\|.
   \] 
Where $\overline{\alpha}$ is the maximum of  $\alpha_i(t)$ over $i$ and $t \in [0,T^*]$. Multiplying this inequality by $(2E(t))^{-\frac{1}{2}}$ we may obtain
\[
   \frac{dE^\frac{1}{2}(t)}{dt}  \leq 2^\frac{1}{2}\overline{\alpha}\|\beta_i\|.
\] 
Let $M_{\beta} > 0$ satisfy $\|\beta_i(t) \| \leq M_{\beta}$ for all $i$ and $t \in [0,T^*]$. We obtain
 \[
   \left( E^{\frac{1}{2}}(t) - E^{\frac{1}{2}}(0)\right) \leq 2^{-\frac{1}{2}}\overline{\alpha} M_{\beta}T^*.
 \]

And we deduce that 
  \[
    \|v_i\| \leq \left( E^{\frac{1}{2}}(0) + 2^{-\frac{1}{2}}\overline{\alpha} M_{\beta}T^*\right) < \infty.
  \]

We then deduce the upper bound of the vector position $x_i(t)$ as
\[
    \|x_i(t)\|\leq \left( E^{\frac{1}{2}}(0) + 2^{-\frac{1}{2}}\overline{\alpha} M_{\beta}T^*\right)T^* + \|x_i(0)\|.
\] 
Since the solution remains in a compact set for $t \in [0,T^*)$ we obtain a contradiction.  
\end{proof}

\section{Masking Effect}\label{sec:mask-example}
We provide an example of influence function with masking effect. Recall that the Cucker Smale influence function is given by  
 \[ 
 \phi^{CS}(r_{ij} )= \frac{1}{(1 + r_{ij}^2)^{\beta}}.
 \]
Here $r_{ij} = \|x_i - x_j\|$ is the distance between agents $i$ and agent $j$, and $\beta > 0 $ is a parameter of the model.
Before we define the influence function that takes into account the masking
effect, let us first define quantities $w_{ijl}$ for any three distinct agents
$i,j$ and $l$ by 
\[
  w_{ijl} =\frac{\langle x_j-x_i,x_l-x_i\rangle}{\sqrt{\|x_j-x_i\|^2\|x_l-x_i\|^2 + r^4}}, \,\,\,\,\,1\leq i,j,l\leq N,
\]
where $r>0$ is a fixed constant. We note that $w_{ijl}$ is a smoothed measure
of $\cos(\theta)$ where $\theta$ is the angle at the vertex $x_i$ of the
triangle formed by $x_i, x_j$ and $x_l$. 

  Agent $l$ masks agent $j$ from agent $i$, if and only if, the angles
  at both the vertices $x_i$ and $x_j$ (of the triangle formed by $x_i, x_j$
  and $x_l$) are close zero. Thus, if and only if
  $w_{ijl}+w_{jil} \approx 2$. Motivated by this observation, 
we define : 
\begin{equation} \label{eq:symmetric-inflce-with-ME}
\phi_{ij}^{ME}(x)  =\phi^{CS}(r_{ij})\,\exp \left\{-\sum_{\substack{1\leq l\leq N\\l\neq i \\l\neq j}} h(w_{ijl} + w_{jil})\right\},
\end{equation}
where, $h$ is defined below:
\[
\begin{aligned}
  h(s) &= 0 \,\,\,\,\,\ \,\,\,\,\,\,\,\,\,\,\text{ if } s \leq 1.9 ,\\
  h(s) & = 3000(s-1.9)^3 \,\,\,\text{ if } 1.9\leq s \leq 2.
\end{aligned}
\]

The exponential factor weighs the contribution to masking effect by all third agents
$l$ and 
reduces the influence of agent $j$ on agent $i$ accordingly.
Finally, we define influence function with masking effect by normalization:

\[
\phi_{ij}(x) = \frac{\phi_{ij}^{ME}(x)}{\sum_{k=1}^N \phi_{ik}^{ME}(x)}.
\]

\printbibliography
\end{document}